\newtheorem{thm}{Theorem}[section]
\newtheorem{lemma}[thm]{Lemma}
\newtheorem{prop}[thm]{Proposition}
\newtheorem{conj}[thm]{Conjecture}
\newcommand{\beq}[1]{\begin{equation}\label{#1}}
\newcommand{\enq}[0]{\end{equation}}
\tikzstyle{V}=[draw,circle, fill=black, minimum size=4pt,inner sep=0pt]
\begin{document}

\renewcommand{\thefootnote}{\fnsymbol{footnote}}
\footnotetext{2000 Mathematics Subject Classification:
05C69 (Primary), 05A16 (Secondary)}
\footnotetext{Key words and phrases:
independent set, stable set, regular graph}

\title{The number of independent sets in a graph with small maximum degree}

\author{
David Galvin\thanks{Department of Mathematics,
University of Notre Dame, 255 Hurley Hall, Notre Dame IN
46556, USA; {\tt dgalvin1@nd.edu}}
\and
Yufei Zhao\thanks{Department of Mathematics, Massachusetts Institute of Technology,
Cambridge, MA 02139, USA; {\tt yufeiz@mit.edu}}
}

\date{\today}

\maketitle

\begin{abstract}
Let ${\rm ind}(G)$ be the number of independent sets in a graph $G$. We show that if $G$ has maximum degree at most $5$ then
$$
{\rm ind}(G) \leq 2^{{\rm iso}(G)} \prod_{uv \in E(G)} {\rm ind}(K_{d(u),d(v)})^{\frac{1}{d(u)d(v)}}
$$
(where $d(\cdot)$ is vertex degree, ${\rm iso}(G)$ is the number of isolated vertices in $G$ and $K_{a,b}$ is the complete bipartite graph with $a$ vertices in one partition class and $b$ in the other), with equality if and only if each connected component of $G$ is either a complete bipartite graph or a single vertex. This bound (for all $G$) was conjectured by Kahn.

A corollary of our result is that if $G$ is $d$-regular with $1 \leq d \leq 5$ then
$$
{\rm ind}(G) \leq \left(2^{d+1}-1\right)^\frac{|V(G)|}{2d},
$$
with equality if and only if $G$ is a disjoint union of $V(G)/2d$ copies of $K_{d,d}$. This bound (for all $d$) was conjectured by Alon and Kahn and recently proved for all $d$ by the second author, without the characterization of the extreme cases.

Our proof involves a reduction to a finite search. For graphs with maximum degree at most $3$ the search could be done by hand, but for the case of maximum degree $4$ or $5$, a computer is needed.
\end{abstract}

\section{Introduction and statement of the result}

For a graph $G$ let ${\rm ind}(G)$ denote the number of independent sets of $G$ (sets of vertices no two of which are adjacent). Kahn \cite{Kahn} made the following conjecture concerning ${\rm ind}(G)$. (All graphs in this note are finite, undirected and simple.)
\begin{conj} \label{strong-ind_conj}
For any graph $G$,
$$
{\rm ind}(G) \leq 2^{{\rm iso}(G)} \prod_{uv \in E(G)} {\rm ind}(K_{d(u),d(v)})^{\frac{1}{d(u)d(v)}},
$$
where $d(x)$ is the degree of $x$, ${\rm iso}(G)$ is
the number of isolated vertices in $G$ and $K_{a,b}$ is the complete bipartite graph with $a$ vertices in one
partition class and $b$ in the other. Equivalently,
\begin{equation} \label{kahn-bound}
{\rm ind}(G) \leq
2^{{\rm iso}(G)}\prod_{uv \in E(G)}
\left(2^{d(u)}+2^{d(v)}-1\right)^\frac{1}{d(u)d(v)}
\end{equation}
\end{conj}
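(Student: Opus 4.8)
The approach is to reduce Conjecture~\ref{strong-ind_conj} to the bipartite case by a soft argument, and then to prove the bipartite case by an entropy computation on a uniformly random independent set, with essentially all of the difficulty concentrated in one local optimization.

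\textbf{Step 1 (reduction to bipartite graphs).} Both sides of \eqref{kahn-bound} are multiplicative over connected components and an isolated vertex contributes a factor of $2$ to each side, so we may assume $G$ has no isolated vertex and must show $\mathrm{ind}(G) \le \prod_{uv\in E(G)}(2^{d(u)}+2^{d(v)}-1)^{1/(d(u)d(v))}$. Pass to the bipartite double cover $G\times K_2$: every vertex $v$ of $G$ becomes two vertices of degree $d(v)$, and every edge $uv$ becomes two edges each joining a degree-$d(u)$ vertex to a degree-$d(v)$ vertex, so the right-hand side for $G\times K_2$ equals the square of the right-hand side for $G$. Combining with the inequality $\mathrm{ind}(G\times K_2)\ge\mathrm{ind}(G)^2$ (the tensor / bipartite-swapping lemma of Zhao that underlies the proof of the Alon--Kahn conjecture referenced in the abstract), the bipartite case implies the general case. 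So assume from now on that $G$ is bipartite with parts $A$ and $B$ and no isolated vertex.

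\textbf{Step 2 (entropy decomposition).} Let $\mathbf I$ be a uniformly random independent set, so $H(\mathbf I)=\log\mathrm{ind}(G)$, and write $\mathbf I_S=\mathbf I\cap S$. Split $H(\mathbf I)=H(\mathbf I_A)+H(\mathbf I_B\mid\mathbf I_A)$. By the chain rule and ``conditioning reduces entropy'', $H(\mathbf I_B\mid\mathbf I_A)\le\sum_{b\in B}H(\mathbf I_b\mid\mathbf I_{N(b)})$. For $H(\mathbf I_A)$ apply the fractional Shearer inequality to the cover of $A$ by the neighbourhoods $\{N(b):b\in B\}$ with weights $w_b$ chosen as a function of the degrees around $b$ (for a $d$-regular graph $w_b=1/d$ recovers Kahn's argument; in general the weights are tuned so that the entropy ``budget'' spent at $b$ is apportioned among the edges incident to $b$), obtaining $H(\mathbf I_A)\le\sum_{b\in B}w_b\,H(\mathbf I_{N(b)})$. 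Running the symmetric computation with $A$ and $B$ interchanged and averaging, one is reduced to proving, for every vertex $b$, a local inequality bounding $w_b\,H(\mathbf I_{N(b)})+H(\mathbf I_b\mid\mathbf I_{N(b)})$ (or its symmetrized analogue) by $\sum_{a\in N(b)}\frac{1}{d(a)d(b)}\log\!\big(2^{d(a)}+2^{d(b)}-1\big)$; summing over $b$ reproduces exactly the target product.

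\textbf{Step 3 (the local inequality --- the main obstacle).} Set $q_b=\Pr[\mathbf I\cap N(b)=\emptyset]$. The easy estimates are $H(\mathbf I_b\mid\mathbf I_{N(b)})\le q_b$ and $H(\mathbf I_{N(b)})\le H(q_b)+(1-q_b)\log(2^{d(b)}-1)$, and if every neighbour of $b$ had the same degree one could simply optimise over $q_b\in[0,1]$ and finish as in Kahn. The genuine difficulty is that $H(\mathbf I_{N(b)})$ must be refined so that the \emph{individual} degrees $d(a)$ of the vertices $a\in N(b)$ enter the bound --- this calls for a second application of Shearer (or a direct chain-rule conditioning) inside $N(b)$, followed by a delicate multivariate optimisation showing that, subject to the marginal constraints imposed by the global hard-core measure, the relevant functional is maximised exactly by the product of hard-core measures on disjoint copies of $K_{d(a),d(b)}$. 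Making this estimate tight simultaneously for every pair $(d(a),d(b))$, and controlling the coupling between the two endpoints of each edge, is the crux: this is the step with no known hand computation and where essentially all the real work lies, whereas Steps~1 and~2 are structural and more or less forced.
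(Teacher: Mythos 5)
The statement you are addressing is stated in the paper as Conjecture \ref{strong-ind_conj}: the paper does \emph{not} prove it in general, but only for graphs of maximum degree at most $5$ (Theorem \ref{thm-non_bip_delta_5}), and by a completely different route --- an induction on the number of vertices driven by the recursion ${\rm ind}(G)={\rm ind}(G-x)+{\rm ind}(G-x-N(x))$, together with a finite (partly computer-assisted) search showing that every bipartite graph of maximum degree at most $5$ contains a ``good'' vertex $x$, i.e.\ one satisfying $\Pi(G)\ge\Pi(G-x)+\Pi(G-x-N(x))$. The non-bipartite case is then deduced exactly as in your Step 1, via Zhao's inequality ${\rm ind}(G)^2\le{\rm ind}(G\times K_2)$ and the observation that the right-hand side of \eqref{kahn-bound} squares under the bipartite double cover; that reduction is correct and is the one ingredient your write-up shares with the paper.

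The rest of your proposal is not a proof. In Step 2 the fractional Shearer weights $w_b$ are never specified, and there is a genuine obstruction to choosing them: Shearer requires $\sum_{b\in N(a)}w_b\ge 1$ for every $a\in A$, while matching the target $\sum_{a\in N(b)}\frac{1}{d(a)d(b)}\log\bigl(2^{d(a)}+2^{d(b)}-1\bigr)$ forces the entropy spent at $b$ to be apportioned unevenly among the edges at $b$ according to the degrees $d(a)$; Kahn's one-parameter optimisation in $q_b$ does not survive this, because $H(\mathbf{I}_{N(b)})$ is a single scalar that cannot be split edge-by-edge without control on the joint distribution inside $N(b)$ that the easy estimates do not provide. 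Step 3 then concedes the point outright: you describe the required local multivariate optimisation as ``the crux'' with ``no known hand computation.'' An argument whose central inequality is declared open is a research programme, not a proof --- and in this instance the failure of the entropy method to extend beyond the regular bipartite case is precisely why the present paper resorts to the good-vertex induction and a finite search, and why the conjecture is left as a conjecture here. If you want a statement provable with the tools actually available, target the maximum-degree-at-most-$5$ case via Proposition \ref{prop-Delta-at-most-5}, or the regular case handled in Lemma \ref{lem-regular}.
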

(Here we adopt the convention that the graph without vertices has $1$ independent set, and that a product over an empty set is $1$.) Note that there is equality in (\ref{kahn-bound}) when $G$ is the disjoint union of complete bipartite graphs together with some isolated vertices.

The special case of Conjecture \ref{strong-ind_conj} when $G$ is $d$-regular was made implicitly by Alon \cite{Alon} and appears explicitly in \cite{Kahn}. It was recently resolved by Zhao \cite{Zhao}.
\begin{thm} \label{reg-ind_thm}
For a $d$-regular graph $G$ with $d \geq 1$,
\begin{equation} \label{zhao-bound}
{\rm ind}(G) \leq \left(2^{d+1}-1\right)^{\frac{|V(G)|}{2d}}.
\end{equation}
\end{thm}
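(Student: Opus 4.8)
The plan is to deduce \eqref{zhao-bound} from the bipartite case, which is already known, via the bipartite double cover. \textbf{Step 1 (the bipartite case, quoted).} If $G$ is moreover bipartite, then \eqref{zhao-bound} is a theorem of Kahn proved by entropy: with $I$ a uniformly random independent set and $X_v=\mathbf 1[v\in I]$, one expands $H(I)=H(I\cap R)+H(I\cap L\mid I\cap R)$ over the two sides $L,R$, bounds the first term by Shearer's inequality applied to the neighbourhoods $\{N(v):v\in L\}$ (each vertex of $R$ lies in exactly $d$ of them, by $d$-regularity), bounds the second by subadditivity of entropy together with the fact that conditioning on more reduces entropy, and then checks the one-vertex estimate $\tfrac1d H(I\cap N(v))+H(X_v\mid I\cap N(v))\le\tfrac1d\log_2(2^{d+1}-1)$; summing over $v\in L$ yields $\log_2{\rm ind}(G)=H(I)\le\tfrac{|V(G)|}{2d}\log_2(2^{d+1}-1)$. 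I would use this as a black box.

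\textbf{Step 2 (reduction).} For an arbitrary $d$-regular $G$ on $n$ vertices, let $\tilde G=G\times K_2$ be the bipartite double cover, with vertex set $V(G)\times\{0,1\}$ and edges $(u,0)(v,1)$ for $uv\in E(G)$. Then $\tilde G$ is $d$-regular and bipartite on $2n$ vertices, so by Step 1 ${\rm ind}(\tilde G)\le(2^{d+1}-1)^{2n/(2d)}=(2^{d+1}-1)^{n/d}$, and it suffices to prove
\[
{\rm ind}(G)^2\ \le\ {\rm ind}(\tilde G);
\]
taking square roots then gives \eqref{zhao-bound}. (If $G$ is itself bipartite then $\tilde G\cong G\sqcup G$ and this is an equality, so the inequality cannot be improved in general.)

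\textbf{Step 3 (the swapping trick).} Identify ${\rm ind}(G)^2$ with ordered pairs $(I,J)$ of independent sets of $G$, and ${\rm ind}(\tilde G)$ with pairs $(A,B)$ of subsets of $V(G)$ having no edge of $G$ between them; I will construct an injection $(I,J)\mapsto(A,B)$ after fixing a linear order on $V(G)$. Since $I$ and $J$ are independent, any edge of $G$ meeting both joins $I\setminus J$ to $J\setminus I$, so every such ``conflict'' lies inside the bipartite graph $H=G[I\triangle J]$ --- properly $2$-coloured by $(I\setminus J,\,J\setminus I)$ --- and indeed inside a connected component of $H$ carrying an edge. Now put $I\cap J$ into both $A$ and $B$, leave each isolated vertex of $H$ on whichever side it came from, and for each component $C$ of $H$ with an edge put all of $V(C)$ into $A$ if the least vertex of $C$ lies in $I$, and into $B$ otherwise. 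No edge of $G$ then runs between $A$ and $B$: an edge inside $I\cap J$, or between $I\cap J$ and $I\triangle J$, is impossible as $I,J$ are independent; an edge inside a component of $H$ now lies wholly in $A$ or wholly in $B$; and any remaining edge has an endpoint outside $I\cup J$, hence in neither $A$ nor $B$. For injectivity, observe that $A\cup B=I\cup J$ and $A\cap B=I\cap J$ are determined by $(A,B)$, hence so is $H$; for each component $C$ of $H$ with an edge one reads off whether $V(C)\subseteq A$ or $V(C)\subseteq B$, which reveals the side of the least vertex of $C$, and a connected bipartite graph has a unique proper $2$-colouring with a prescribed colour on a prescribed vertex, so $C\cap I$ and $C\cap J$ are recovered; the isolated vertices of $H$ and the set $I\cap J$ are read off directly, so $(I,J)$ is recovered.

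The step I expect to be the real content is designing this injection: forcing $(A,B)$ to be conflict-free appears to discard both the internal $2$-colouring of each conflicting component and the side it sat on, and the fixed linear order is exactly the device that makes all of this reconstructible. Step 1, by contrast, is a genuinely different (entropy) argument, but it is standard and can simply be cited.
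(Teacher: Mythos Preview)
Your argument is correct: Step~1 is Kahn's entropy bound for bipartite $d$-regular graphs, and Steps~2--3 faithfully reproduce Zhao's bipartite double cover reduction, including the swapping injection that proves ${\rm ind}(G)^2\le{\rm ind}(G\times K_2)$. The injectivity check is clean and the verification that $(A,B)$ carries no $G$-edge between its parts is complete.

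As for comparison with the paper: the paper does not itself prove Theorem~\ref{reg-ind_thm} for general $d$. It states the result as Zhao's theorem and summarises the proof in a single sentence (``Kahn proved \eqref{zhao-bound} for bipartite $G$ using entropy methods, and Zhao's proof \ldots\ deduces the non-bipartite result from the bipartite result''); later, in Lemma~2.3, it quotes the inequality ${\rm ind}(G)^2\le{\rm ind}(G\times K_2)$ from~\cite{Zhao} as a black box. So what you have written is precisely the argument the paper is citing, not an alternative to it.

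Where the paper \emph{does} offer something different is for $d\le 5$: there Theorem~\ref{reg-ind_thm} falls out as a corollary of Theorem~\ref{thm-non_bip_delta_5}, whose bipartite case is proved by the recursive ``good vertex'' method (Proposition~\ref{prop-Delta-at-most-5} and Lemma~\ref{lem-regular}) rather than entropy. The non-bipartite extension still invokes Zhao's double-cover inequality, but the bipartite bound itself is obtained without Shearer or any information-theoretic input. The trade-off is that your (i.e.\ Kahn--Zhao's) route works for all $d$ in one stroke, while the paper's route gives the sharper statement (Conjecture~\ref{strong-ind_conj}) and the equality characterisation, but only up to maximum degree~$5$ and at the cost of a computer search.
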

Kahn \cite{Kahn} proved (\ref{zhao-bound}) for bipartite $G$ using entropy methods, and Zhao's proof also uses entropy in the sense that he deduces the non-bipartite result from the bipartite result.

\medskip

The aim of this note is to prove the following.
\begin{thm} \label{thm-non_bip_delta_5}
For all $G$ with maximum degree at most $5$, (\ref{kahn-bound}) holds. There is equality in (\ref{kahn-bound}) if and only if each connected component of $G$ is either a complete bipartite graph or a single vertex.
\end{thm}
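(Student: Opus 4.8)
Denote by $g(G)$ the right-hand side of (\ref{kahn-bound}), so the assertion is that ${\rm ind}(G)\le g(G)$ whenever $G$ has maximum degree at most $5$, with equality exactly when every component of $G$ is a complete bipartite graph or an isolated vertex. Both ${\rm ind}$ and $g$ are multiplicative over connected components — for $g$ because the vertex degrees appearing in the formula do not change when one passes to a component — and ${\rm ind}(H)=g(H)$ on a disjoint union forces equality on each part. So it is enough to prove, for connected $G$ of maximum degree at most $5$, that ${\rm ind}(G)\le g(G)$, with equality if and only if $G$ is complete bipartite; for such $G$ on at least two vertices there are no isolated vertices, so $g(G)=\prod_{uv\in E(G)}\bigl(2^{d(u)}+2^{d(v)}-1\bigr)^{1/(d(u)d(v))}$.

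I would induct on $|V(G)|$. The empty graph, a single vertex and the complete bipartite graphs are dealt with directly — in particular ${\rm ind}(K_{a,b})=2^a+2^b-1=g(K_{a,b})$ — and by multiplicativity the inductive hypothesis yields ${\rm ind}(H)\le g(H)$, with the stated equality description, for every graph $H$ on fewer vertices. For the inductive step pick any vertex $v$ and use the identity
\[
{\rm ind}(G)={\rm ind}(G-v)+{\rm ind}(G-N[v]),
\]
where $N[v]=\{v\}\cup N(v)$ is the closed neighbourhood: an independent set either avoids $v$ or contains $v$ and hence none of its neighbours. Since $G-v$ and $G-N[v]$ are smaller, the inductive hypothesis gives ${\rm ind}(G)\le g(G-v)+g(G-N[v])$, along with structural information when a summand is tight. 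The whole theorem therefore reduces to the purely local inequality
\[
g(G-v)+g(G-N[v])\le g(G)
\]
for every vertex $v$ of every graph $G$ of maximum degree at most $5$ — call it $(\star)$ — together with a determination of the local configurations at $v$ for which $(\star)$ holds with equality.

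The role of the degree bound is that $(\star)$ is a \emph{finite} statement. The ratios $g(G-v)/g(G)$ and $g(G-N[v])/g(G)$ are products of explicitly computable factors: powers of $2$ for vertices that become isolated, and ratios of edge-weights $w(a,b)=(2^a+2^b-1)^{1/(ab)}$ for edges that are deleted or whose endpoint-degrees drop. All of these factors are supported within a bounded-radius neighbourhood of $v$ and involve only degrees lying in $\{1,\dots,5\}$; concretely, each ratio is an explicit elementary function of the degree sequence of such a neighbourhood and of its internal adjacencies. Hence $(\star)$, and the question of when it is an equality, amount to finitely many explicit numerical inequalities, indexed by the possible local pictures around $v$. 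For maximum degree at most $3$ the resulting list is small enough to check by hand; for maximum degree $4$ or $5$ it is large and the verification is carried out by computer.

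I expect the main difficulties to be two. First, organizing the finite search: isolating exactly which bounded, degree-labelled neighbourhood data around $v$ control the two ratios, bounding the number of configurations, and running the maximum-degree-$4$ and -$5$ cases reliably. Second, the equality endgame. If ${\rm ind}(G)=g(G)$ for connected $G$, then all of the inequalities above are equalities for \emph{every} choice of $v$: each of $G-v$ and $G-N[v]$ is a disjoint union of complete bipartite graphs and isolated vertices, and $(\star)$ is tight at $v$. Since the finite search also outputs the (few) local pictures at which $(\star)$ is tight — all of them ``complete-bipartite-like'' — one must then argue that these conditions holding simultaneously at every vertex force $G$ itself to be a single complete bipartite graph. (A side issue is whether $(\star)$ really holds at \emph{every} vertex once the maximum degree is at most $5$; if instead $v$ must sometimes be chosen with care, the inductive step still goes through but the equality analysis becomes more delicate.)
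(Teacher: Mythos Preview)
Your overall framework --- induct on $|V(G)|$ via ${\rm ind}(G)={\rm ind}(G-v)+{\rm ind}(G-N[v])$, reduce to a local inequality $(\star)$, and verify $(\star)$ by a finite search over bounded-degree neighbourhoods --- is exactly the paper's. But two ingredients of the paper's argument are genuinely missing, and without them the plan does not go through.

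First, $(\star)$ is \emph{false} at some vertices, already for maximum degree $4$: the paper exhibits a seven-vertex tree (Figure~\ref{fig:not-good}) in which a leaf $x$ is not good. So the finite search cannot just enumerate neighbourhoods and check $(\star)$ everywhere; it must identify, for each graph, a vertex at which $(\star)$ \emph{does} hold. This is the real content of the paper (Proposition~\ref{prop-Delta-at-most-5}): in a bipartite graph of maximum degree at most $5$, either a minimum-degree vertex $v$ is good, or one of its neighbours is. You flag this as a ``side issue'', but it is the main issue, and your equality analysis --- which concludes that $(\star)$ is tight and $G-v$, $G-N[v]$ are unions of complete bipartite graphs \emph{for every} $v$ --- collapses once $(\star)$ can fail at some vertices. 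The paper instead uses equality in $(\star)$ at the carefully chosen good vertex to peel off one complete-bipartite component at a time.

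Second, you are missing the reduction to bipartite graphs. The paper runs the finite search only for bipartite $G$ --- the level decomposition around $x$ it uses has no intra-level edges precisely because of bipartiteness, which keeps the case count manageable --- and then invokes Zhao's inequality ${\rm ind}(G)^2\le{\rm ind}(G\times K_2)$, strict for non-bipartite $G$, to deduce (\ref{kahn-bound}) with strict inequality in the non-bipartite case. This step simultaneously extends the bound beyond bipartite graphs and settles the equality question there. Without it you would have to run the (already borderline) search over a far larger class of local configurations, and there is no indication this is feasible at $\Delta=5$.
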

As a corollary we obtain a new proof of Zhao's result in the case $d \leq 5$ that does not use entropy and that in addition provides the information that there is equality in (\ref{zhao-bound}) if and only if $G$ is a disjoint union of $V(G)/2d$ copies of $K_{d,d}$.

\medskip

Our proof is motivated by the observation that for $G$ with at least one vertex the quantity ${\rm ind}(G)$ satisfies the recursion
\begin{equation} \label{ind-recurrsion}
{\rm ind}(G) = {\rm ind}(G-x) + {\rm ind}(G-x-N(x))
\end{equation}
where $x$ is any vertex of $G$, and $N(x)$ denotes its set of neighbours. To see (\ref{ind-recurrsion}), consider first those independent sets which do not include $x$, and then those that do.

Set
$$
\Pi(G) = 2^{{\rm iso}(G)}\prod_{uv \in E(G)}
\left(2^{d(u)}+2^{d(v)}-1\right)^\frac{1}{d(u)d(v)}.
$$
So Conjecture \ref{strong-ind_conj} asserts that for all $G$, ${\rm ind}(G) \leq \Pi(G)$. Say that $x \in V(G)$ is {\em good} for $G$ if
\begin{equation} \label{def-good}
\Pi(G) \geq \Pi(G-x) + \Pi(G-x-N(x)).
\end{equation}
Note that (\ref{def-good}) is satisfied with equality if the connected component of $G$ that contains $x$ is either $K_{a,b}$ or a single vertex. Note also that if $uv \in E(G)$ is such that both $u$ and $v$ are at distance at least $3$ from $x$, it contributes the same multiplicative factor to both the right- and left-hand sides of (\ref{def-good}). This observation reduces the verification of (\ref{kahn-bound}) for graphs with bounded degree to a finite search.

\medskip

Here is a conjecture that implies Conjecture \ref{strong-ind_conj} (we will prove the implication in the next section).
\begin{conj} \label{stronger-ind_conj}
For every graph $G$ with at least one vertex there is $x \in V(G)$ that is good for $G$.
\end{conj}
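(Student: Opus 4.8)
$\Pi(\cdot)$ is multiplicative over connected components --- because $2^{{\rm iso}(\cdot)}$ is and the edge product is --- and deleting $x$, or $x$ together with $N(x)$, affects only the component of $x$; so after cancelling the (nonzero) $\Pi$-factor contributed by the other components, the defining inequality $(\ref{def-good})$ for $x$ in $G$ is exactly $(\ref{def-good})$ for $x$ inside its own component. I would therefore first reduce to $G$ connected. If $G$ is a single vertex or a complete bipartite graph $K_{a,b}$, then, as the text observes, $(\ref{def-good})$ holds with equality for every $x$ and we are done; so assume $G$ is connected and is neither $K_1$ nor any $K_{a,b}$, the task being to exhibit one vertex $x$ satisfying $(\ref{def-good})$.

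The engine is a localization of $(\ref{def-good})$. Fix a candidate $x$. Deleting $x$ lowers the degree of each vertex of $N(x)$ by $1$; deleting $x$ and $N(x)$ additionally lowers the degree of each $v$ at distance exactly $2$ from $x$ by $|N(v)\cap N(x)|$; every other degree, and the weight $w(a,b):=(2^a+2^b-1)^{1/(ab)}$ of every edge having both endpoints at distance at least $3$ from $x$, is the same in $\Pi(G)$, $\Pi(G-x)$ and $\Pi(G-x-N(x))$. Dividing $(\ref{def-good})$ through by the common product of the unchanged weights turns it into a numerical inequality whose only data are: the induced subgraph on $x$, its neighbours and its distance-$2$ vertices; the $G$-degrees of the neighbours and of the distance-$2$ vertices; and, for each distance-$2$ vertex $v$, the multiset of $G$-degrees of its neighbours at distance $3$ from $x$ (these enter because deleting $N(x)$ re-weights the edges from $v$ to distance-$3$ vertices). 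If $\Delta(G)\le D$ then each of these finitely many parameters is bounded, so $(\ref{def-good})$ for the best choice of $x$ reduces to a finite list of inequalities --- this is the reduction underlying Theorem \ref{thm-non_bip_delta_5}, small enough to verify by hand when $D\le 3$ and by computer when $D\in\{4,5\}$.

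For unbounded degree this list becomes an infinite family indexed by the degree parameters, and that is the real obstacle. The plan would be twofold. First, choose $x$ cleverly rather than arbitrarily --- for instance an endpoint of an edge $uv$ maximising $d(u)+d(v)$, which forces every degree appearing above (the distance-$3$ ones included) to be at most $d(u)+d(v)-1$ and ties the whole local configuration to a single largest degree in play, thereby normalising and shrinking the family. Second, and this is the hard step, one would show that the normalised inequality, regarded as a function of the remaining free degree parameters, is worst at an extreme configuration --- presumably via a monotonicity or convexity argument in the degrees entering the $w(a,b)$ --- so that only finitely many cases survive and can be checked. I know of no such monotonicity statement in general, and it is exactly this gap that keeps Conjecture \ref{stronger-ind_conj} a conjecture here; a proof along these lines would, through the implication recorded in the text, also settle Kahn's Conjecture \ref{strong-ind_conj}. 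A lighter target worth attempting first is the averaging bound $\sum_{x\in V(G)}\left(\Pi(G-x)+\Pi(G-x-N(x))\right)\le|V(G)|\,\Pi(G)$, which holds with equality exactly when $G$ is a disjoint union of complete bipartite graphs and isolated vertices and which, if established, would immediately yield both Conjecture \ref{stronger-ind_conj} and its equality case; it checks out on small examples, though a general proof seems to demand the same control over the degree parameters.
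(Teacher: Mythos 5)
You have not proved the statement, and you should be aware that the paper does not prove it either: Conjecture \ref{stronger-ind_conj} is left open there, and what the paper actually establishes is only the special case of bipartite graphs with maximum degree at most $5$ (Proposition \ref{prop-Delta-at-most-5}), which is all that is needed for Theorem \ref{thm-non_bip_delta_5}. Your localization of \eqref{def-good} is correct and is exactly the paper's reduction: after cancelling edges with both endpoints at distance at least $3$ from $x$, goodness of $x$ becomes the finite-data inequality \eqref{to_prove}, and for bounded degree this leaves finitely many configurations. But the paper's execution differs from your sketch in the details that matter: it works only with bipartite $G$, it pads the degrees of level-$3$ vertices up to $\Delta$ using the two-variable inequality \eqref{f(a,b)_fact_1} (Lemma \ref{lem-obs_1}) to shrink the search, it handles the regular case by an explicit smoothing argument (Lemma \ref{lem-regular}), and it chooses $x$ to be a vertex of \emph{minimum} degree (or, for fourteen exceptional local configurations found by the computer search, a neighbour of such a vertex) --- not an endpoint of an edge maximising $d(u)+d(v)$; the paper's own guess about a canonical good vertex is the maximum-degree vertex (Conjecture \ref{stronger-ind_conj-Delta}), and even that is only verified up to $\Delta=4$ (Lemma \ref{lem-Delta-4-search'}).

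The genuine gap is the one you name yourself: for unbounded degree the localized inequality becomes an infinite family indexed by degree parameters, and no monotonicity or convexity principle is known that reduces it to extremal configurations. Nothing in the paper fills this gap, so your plan stalls exactly where the authors' does, and the conjecture remains a conjecture. Two further cautions. First, your proposed averaging strengthening $\sum_{x}\bigl(\Pi(G-x)+\Pi(G-x-N(x))\bigr)\le |V(G)|\,\Pi(G)$ would indeed imply Conjecture \ref{stronger-ind_conj}, but it is itself an unproven (and possibly stronger) statement, not a ``lighter target'' in any demonstrated sense --- Figure \ref{fig:not-good} shows individual vertices can fail badly, so the claimed equality characterization and even the inequality would need a real proof. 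Second, your choice of $x$ on a degree-maximal edge only bounds all degrees by $d(u)+d(v)-1$, which does not make the family of configurations finite; it merely renormalizes it, so the ``hard step'' you defer is the entire content of the problem.
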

Note that it is not true that for every $G$ {\em every} $x \in V(G)$ is good for $G$. See Figure \ref{fig:not-good} for an example.

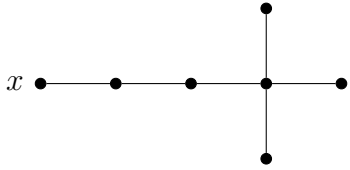
\begin{figure}[ht!] \center
\begin{tikzpicture}
	\node[V, label=left:$x$] (0) {};
	\node[right of=0, V] (1) {};
	\node[right of=1, V] (2) {};
	\node[right of=2, V] (3) {};
	\node[right of=3, V] (4) {};
	\node[above of=3, V] (5) {};
	\node[below of=3, V] (6) {};
	\draw (0)--(1)--(2)--(3) edge (4) edge (5) edge (6);
\end{tikzpicture}
\caption{An example where vertex $x$ is not good.\label{fig:not-good}}
\end{figure}

We also propose a strengthening of Conjecture \ref{strong-ind_conj} that is suggested by our computations.

\begin{conj} \label{stronger-ind_conj-Delta}
Let $x \in V(G)$ be a vertex of maximum degree. Then $x$ is good for $G$.
\end{conj}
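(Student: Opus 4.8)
The first step is to reduce to the connected case: since $\Pi$ is multiplicative over connected components and deleting $x$, or $x$ together with $N(x)$, affects only the component $C$ of $G$ containing $x$ (note $N(x)\subseteq V(C)$), each of $\Pi(G)$, $\Pi(G-x)$, $\Pi(G-x-N(x))$ equals the corresponding quantity for $C$ times the common positive factor $\prod_{C'\ne C}\Pi(C')$, so $x$ is good for $G$ if and only if $x$ is good for $C$. We may therefore assume $G$ is connected. If $d(x)=0$ then $G$ is a single vertex and $\Pi(G)=2\ge 1+1=\Pi(G-x)+\Pi(G-x-N(x))$, so assume $d:=d(x)\ge 1$, whence $|V(G)|\ge 2$. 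It remains to prove
\begin{equation}\label{local-good}
\Pi(G)\ \ge\ \Pi(G-x)+\Pi\bigl(G-x-N(x)\bigr).
\end{equation}

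The second step is to \emph{localize} (\ref{local-good}). Passing from $G$ to $G-x$ removes the $d$ edges at $x$, lowers by one the degree of each vertex of $N(x)$ (isolating any neighbour of degree one), and changes nothing else; passing on to $G-x-N(x)$ additionally deletes $N(x)$ and lowers the degree of each vertex $w$ at distance exactly $2$ from $x$ by $|N(w)\cap N(x)|$ (possibly isolating $w$), and again changes nothing else. Consequently every edge of $G$ with both endpoints at distance $\ge 3$ from $x$ contributes the same factor $(2^{d(u)}+2^{d(v)}-1)^{1/(d(u)d(v))}$ to all three terms of (\ref{local-good}) and therefore cancels. After this cancellation, (\ref{local-good}) is an inequality among three explicit products of factors of the form $(2^{a}+2^{b}-1)^{1/(ab)}$ and powers of $2$, whose shape is determined by one finite piece of data: the subgraph induced on the ball $B$ of radius $3$ about $x$, together with the degree in $G$ of every vertex of $B$ --- the distance-$3$ degrees being needed because an edge joining a distance-$2$ vertex to a distance-$3$ vertex keeps its factor in the first two terms but not in the third. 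Since $x$ has maximum degree, every vertex of $B$ has degree at most $d$ (and $|B|\le 1+d+d^2+d^3$), so for each fixed $d$ there are only finitely many such \emph{local configurations}, one inequality per configuration.

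For $d\le 5$ this makes the conjecture a finite verification: enumerate the admissible degree-labelled radius-$3$ configurations and check (\ref{local-good}) for each. This is a special case of the search underlying Theorem~\ref{thm-non_bip_delta_5} --- where one looks for \emph{some} good vertex, whereas here one checks that the maximum-degree vertex always works --- doable by hand for $d\le 3$ and by computer for $d\in\{4,5\}$; indeed it is exactly these computations that ``suggest'' the conjecture. (Since Conjecture~\ref{stronger-ind_conj-Delta} trivially implies Conjecture~\ref{stronger-ind_conj}, hence Conjecture~\ref{strong-ind_conj}, a proof valid for all $d$ would subsume the rest of the paper.)

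\textbf{The main obstacle is the case of unbounded $d$}, where the family of local configurations is genuinely infinite --- both because $d$ is unbounded and because arbitrarily many distance-$2$ vertices may hang off $N(x)$ --- so no finite search suffices. The natural plan is a rigidity argument anchored at the equality case: when the component of $x$ is complete bipartite (with $x$ on its smaller side) one has equality in (\ref{local-good}), and one would show that any way in which a local configuration differs from such a complete-bipartite one --- a neighbour of degree $<d$, a vertex at distance $2$, a non-edge between the two sides, an edge within $N(x)$ --- can be ``compressed'' toward the complete-bipartite case without increasing the slack $\Pi(G)-\Pi(G-x)-\Pi(G-x-N(x))$, finishing by induction on the number of such defects. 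Making this work demands sharp, sign-definite control of how the single-edge weight $(a,b)\mapsto\log(2^a+2^b-1)/(ab)$ changes when $a$ or $b$ is incremented and when an edge at a distance-$2$ vertex is re-routed; these elementary inequalities do not all run in the same direction, which is precisely why the conjecture remains open. I would expect a proof to come either from a carefully orchestrated compression scheme in which the competing terms net out correctly, or from an entropy / tensor-power argument in the spirit of \cite{Kahn, Zhao} adapted to the one-vertex recursion (\ref{ind-recurrsion}).
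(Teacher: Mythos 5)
This statement is a \emph{conjecture} in the paper, not a theorem: the authors offer no proof, saying only that it is ``suggested by our computations,'' and their actual results establish it only in special cases (bipartite $G$ with maximum degree at most $4$ via Lemma~\ref{lem-Delta-4-search'}, and regular bipartite $G$ via Lemma~\ref{lem-regular}). Your proposal is likewise not a proof: the localization to the radius-$3$ ball about $x$ and the observation that this reduces bounded-degree cases to a finite check are correct and match the paper's setup around \eqref{to_prove}, but you then explicitly leave the unbounded-degree case open, and the ``compression toward the complete bipartite configuration'' plan is only named, not carried out. So there is a genuine gap --- namely, the entire argument --- though to your credit you say so.

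Two factual corrections are worth making. First, your claim that for $d\le 5$ the verification is ``doable \ldots by computer for $d\in\{4,5\}$'' and that ``it is exactly these computations'' behind the conjecture overstates what was done: the paper states explicitly that the analogue of Lemma~\ref{lem-Delta-4-search'} for $\Delta=5$ (i.e.\ checking that every maximum-degree vertex is good) has \emph{too many cases to examine}, which is precisely why the authors retreat to minimum-degree vertices, tolerate fourteen exceptional configurations (Figure~\ref{fig:exception}), and patch those separately. So even the $\Delta=5$ instance of this conjecture is not settled by the paper's search. Second, a small caveat on your localization: the paper's level/edge bookkeeping is set up for bipartite $G$ (so that every edge is an $i(i{+}1)$-edge); for general $G$ the cancellation of edges with both endpoints at distance $\ge 3$ still holds, but the catalogue of local configurations must also admit edges within a level, and the paper's device for passing from bipartite to general graphs (the $G\times K_2$ inequality \eqref{zhao-obsv}) transfers the bound on ${\rm ind}$, not the goodness of a specific vertex, so it does not reduce this conjecture to the bipartite case.
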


We will prove Theorem \ref{thm-non_bip_delta_5} by verifying Conjecture \ref{stronger-ind_conj} for all bipartite $G$ with maximum degree at most $5$, and then using Zhao's observation to deduce \eqref{kahn-bound} for non-bipartite $G$ with maximum degree at most $5$. Here is the main result of this note.

\begin{prop} \label{prop-Delta-at-most-5}
Let $G$ be a bipartite graph with at least one vertex.
\begin{enumerate}
\item \label{main-thm_1}
If $G$ has maximum degree at most $5$, and $v$ is a vertex of minimum degree, then there exists a good vertex $x$ which is either $v$ or one of the neighbors of $v$.
\item \label{main-thm_2}
If $G$ has maximum degree at most $4$ then any vertex $x$ of maximum degree is good.
\item \label{main-thm_3}
If $G$ is $d$-regular then any vertex $x$ is good.
\end{enumerate}
Furthermore, in each case, equality holds in \eqref{def-good} if and only if the connected component of $x$ is either an isolated vertex or a complete bipartite graph.
\end{prop}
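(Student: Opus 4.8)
The plan is to set up the algebra so that checking goodness of a vertex $x$ depends only on the local structure of $G$ near $x$, and then to organize this local data into finitely many cases that can be exhausted (by hand for small degrees, by computer for degrees $4$ and $5$). First I would rewrite the desired inequality $\Pi(G) \ge \Pi(G-x) + \Pi(G-x-N(x))$ by dividing through by $\Pi(G)$. Since every edge $uv$ with both endpoints at distance $\ge 3$ from $x$ contributes the identical factor to all three terms (as the excerpt already observes), and since edges not incident to $x$, $N(x)$, or $N(N(x))$ retain their endpoint-degrees when we pass to $G-x$ and can only change when we pass to $G-x-N(x)$, the ratios $\Pi(G-x)/\Pi(G)$ and $\Pi(G-x-N(x))/\Pi(G)$ are determined by: the degree $d(x)$; the degrees of the vertices in $N(x)$; for each $w \in N(x)$, how many of its neighbors lie in $N(x)$ versus outside; and the degrees of the vertices in $N(N(x))\setminus(\{x\}\cup N(x))$ together with the multiplicities of edges from $N(x)$ into that set. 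Crucially, a vertex $y$ at distance exactly $2$ from $x$ contributes to $\Pi(G-x-N(x))/\Pi(G)$ only through the factors $(2^{d(y)}+2^{d(z)}-1)^{1/d(y)d(z)}$ on its surviving edges $yz$, and one checks (this is the key monotonicity input) that the ``worst case'' — the configuration making the inequality hardest — is when such $y$ has all of its other neighbors outside the $2$-neighborhood and those neighbors have the maximum allowed degree, so that the surviving-edge factors are as large as possible. This lets me replace the genuinely infinite data (the rest of the graph hanging off distance-$2$ vertices) by a bounded stand-in.

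With that reduction in place, the second step is bookkeeping: enumerate all ``local configurations'' around $x$. A local configuration records $d(x)=:k$, the multiset of degrees $(d_1,\dots,d_k)$ of the neighbors $w_1,\dots,w_k$ of $x$, the graph induced on $N(x)$ (equivalently, for each $i$, the number $a_i$ of neighbors of $w_i$ inside $N(x)$), and for each $i$ the multiset of degrees of the $d_i - 1 - a_i$ neighbors of $w_i$ lying outside $\{x\}\cup N(x)$, \emph{with identifications} recording which distance-$2$ vertices are shared by several $w_i$. Since all degrees are at most $5$, each $w_i$ has at most $4$ neighbors besides $x$, and there are at most $5$ neighbors, so this is a finite (though large) list. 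For each configuration I form the corresponding instance of the inequality $1 \ge \Pi(G-x)/\Pi(G) + \Pi(G-x-N(x))/\Pi(G)$, using the worst-case substitution from the previous paragraph for everything beyond distance $2$, and verify it. For part (1) I additionally require $d(x) = d(v)$ or $x \in N(v)$ with $v$ of minimum degree, which constrains the configurations; the point of allowing $x$ to be a neighbor of $v$ (rather than insisting $x=v$) is precisely to rule out the bad examples like Figure 2, where the minimum-degree vertex itself is not good but one of its neighbors is. For parts (2) and (3) the constraint is that $x$ has maximum degree, respectively that the graph is regular, which shrinks the search further and is what makes those cases tractable (by hand in the $3$-regular and $\Delta\le 3$ settings, and with the extra regularity in degree $4$).

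The equality analysis is then folded into the same search: for each configuration the inequality is an inequality between explicit algebraic numbers, and I track when it is tight. Equality in the worst-case substitution forces every distance-$2$ vertex to have all its far neighbors of the maximal degree and no ``extra'' structure; propagating this, together with equality in the core finite inequality, forces the component of $x$ to be vertex-transitive in the relevant local sense and ultimately to be a complete bipartite graph or an isolated vertex — matching the claim. The main obstacle I expect is not the case analysis per se (a computer handles that), but establishing the worst-case monotonicity lemma cleanly: one must show that increasing the degree of a far vertex, or replacing a shared distance-$2$ vertex by several private copies, can only increase $\Pi(G-x-N(x))/\Pi(G)$ relative to $\Pi(G-x)/\Pi(G)$, and do so uniformly enough that finitely many extremal configurations suffice. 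Getting the direction of every such monotonicity right, and checking that the substitutions are simultaneously achievable, is the delicate part; once it is pinned down, the rest is a finite verification together with routine (if tedious) tracking of equality cases.
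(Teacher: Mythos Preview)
Your plan is broadly the paper's: localise the goodness inequality to a bounded neighbourhood of $x$, make a monotonicity reduction on the ``far'' data, and then do a finite search. A few points where your outline diverges or is rougher than the paper's execution are worth flagging.

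First, since $G$ is bipartite, there are no edges inside $N(x)$, so your parameters $a_i$ are all zero; the relevant sharing happens one level further out, among the level-$2$ vertices, and the paper records it via $d_{N(x)}(u)$ for each such $u$. Second, your proposed worst-case reductions go further than the paper's. The paper proves exactly one monotonicity lemma: one may raise every level-$3$ vertex to degree $\Delta$ without making \eqref{def-good} easier (this uses the inequality $f(a-a',b)f(a,b-b')\ge f(a-a',b-b')f(a,b)$ for $a,b\le 5$). It does \emph{not} attempt to ``un-share'' level-$2$ vertices by splitting them into private copies; the sharing pattern is carried intact into the finite search. Since you correctly identify the monotonicity direction here as the delicate step, you should be aware that the paper simply avoids this particular reduction.

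Third, the regular case (part \ref{main-thm_3}) is not handled by search at all: the paper gives a short direct argument, valid for every $d$, by parametrising the level-$2$ vertices by their number $k$ and their out-degrees $(x_1,\dots,x_k)$ and reducing \eqref{def-good} to a log-convexity statement about $g(x)=2^d+2^x-1$. Fourth, for part \ref{main-thm_1} the paper does not run one search over ``$x=v$ or $x\in N(v)$''. It first tests the minimum-degree vertex $v$ itself; the search returns fourteen explicit exceptional local pictures (Figure \ref{fig:exception}) where $v$ fails to be good, and a second, much smaller search then verifies that in each of those fourteen pictures any neighbour of $v$ is good. This two-stage organisation is what keeps the $\Delta=5$ case feasible.
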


The process of verifying that every graph with maximum degree at most $\Delta$ has a good vertex is a finite one (with the number of cases depending on $\Delta$) and so in principle we could use the method outlined here to verify Conjecture \ref{strong-ind_conj} for the set of graphs with maximum degree at most $\Delta$ for some larger values of $\Delta$. However, the number of cases to be examined grows quickly with $\Delta$, and even the case $\Delta=6$ seems out of reach at the moment.

In Section \ref{sec-reduction} we show that Proposition \ref{prop-Delta-at-most-5} implies Theorem \ref{thm-non_bip_delta_5}. In Section \ref{sec-proof} we prove Proposition \ref{prop-Delta-at-most-5}.

\section{Reduction to Proposition \ref{prop-Delta-at-most-5}} \label{sec-reduction}

The following lemma verifies that Conjecture \ref{stronger-ind_conj} implies Conjecture \ref{strong-ind_conj}, thereby reducing the problem to showing the existence of good vertices.

\begin{lemma} \label{lem-imp}
Let ${\cal G}$ be a family of graphs that is closed under deleting vertices. If for every $G \in {\cal G}$ with at least one vertex there is $x \in V(G)$ that is good for $G$, then \eqref{kahn-bound} holds for all $G \in {\cal G}$.
\end{lemma}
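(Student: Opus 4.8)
The plan is a straightforward induction on the number of vertices of $G$, using the recursion for $\mathrm{ind}$ and the existence of a good vertex at each step. For the base case, the graph with no vertices has $\mathrm{ind}(G)=1=\Pi(G)$ (empty product, no isolated vertices), so \eqref{kahn-bound} holds. For the inductive step, suppose $G\in{\cal G}$ has at least one vertex and that \eqref{kahn-bound} holds for every graph in ${\cal G}$ with fewer vertices. By hypothesis there is a vertex $x\in V(G)$ that is good for $G$. Both $G-x$ and $G-x-N(x)$ are obtained from $G$ by deleting vertices, hence lie in ${\cal G}$ (since ${\cal G}$ is closed under vertex deletion) and have strictly fewer vertices than $G$, so the inductive hypothesis applies to them: $\mathrm{ind}(G-x)\le\Pi(G-x)$ and $\mathrm{ind}(G-x-N(x))\le\Pi(G-x-N(x))$.

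Now I would simply chain the recursion \eqref{ind-recurrsion}, the two inductive bounds, and the defining inequality \eqref{def-good} for a good vertex:
$$
\mathrm{ind}(G)=\mathrm{ind}(G-x)+\mathrm{ind}(G-x-N(x))\le\Pi(G-x)+\Pi(G-x-N(x))\le\Pi(G).
$$
This establishes \eqref{kahn-bound} for $G$ and completes the induction.

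There is essentially no obstacle here — the lemma is a clean bookkeeping argument, and all the real content has been pushed into the definition of "good" and into Proposition \ref{prop-Delta-at-most-5}, which guarantees the existence of good vertices for the relevant family (bipartite graphs of maximum degree at most $5$, which is indeed closed under vertex deletion). The only point worth stating carefully is that the family ${\cal G}$ being closed under vertex deletion is exactly what is needed to keep $G-x$ and $G-x-N(x)$ inside ${\cal G}$, so that the inductive hypothesis can be invoked; this is why that hypothesis appears in the lemma. (The equality characterization in Theorem \ref{thm-non_bip_delta_5} is not part of this lemma and will instead be tracked separately using the equality clause of Proposition \ref{prop-Delta-at-most-5}.)
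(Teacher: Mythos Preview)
Your proof is correct and follows exactly the same approach as the paper: induction on the number of vertices, invoking the recursion \eqref{ind-recurrsion}, the inductive hypothesis on $G-x$ and $G-x-N(x)$, and then the goodness of $x$. You are slightly more explicit than the paper in noting that closure of ${\cal G}$ under vertex deletion is what keeps $G-x$ and $G-x-N(x)$ inside ${\cal G}$, but otherwise the arguments are identical.
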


\begin{proof}
We proceed by induction on the number of vertices of $G$. For $G$ with no vertices, there is nothing to prove. Otherwise, choose a good vertex $x$ for $G$. We have
\begin{align}
{\rm ind}(G) & =  {\rm ind}(G-x) + {\rm ind}(G-x-N(x)) \nonumber \\
 &\leq  \Pi(G-x) + \Pi(G-x-N(x)) \label{using_induction-1} \\
 &\leq  \Pi(G) \label{using-xgood}
\end{align}
where in \eqref{using_induction-1} we use the induction hypothesis and in \eqref{using-xgood} we use the fact that $x$ is good for $G$.
\end{proof}

The next two lemmas show that Proposition \ref{prop-Delta-at-most-5} implies Theorem \ref{thm-non_bip_delta_5}. First, we consider the case when the graph $G$ in Theorem \ref{thm-non_bip_delta_5} is bipartite. Then, we relax this restriction by using a recent result of Zhao \cite{Zhao}.

\begin{lemma} \label{lem-imp-bip}
Proposition \ref{prop-Delta-at-most-5} implies Theorem \ref{thm-non_bip_delta_5} for bipartite graphs $G$.
\end{lemma}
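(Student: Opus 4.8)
The plan is to verify the implication purely at the level of the quantity $\Pi$, combining Proposition~\ref{prop-Delta-at-most-5} with Lemma~\ref{lem-imp}. First I would observe that the family of bipartite graphs with maximum degree at most $5$ is closed under deleting vertices: removing a vertex only removes edges and cannot increase any degree or destroy bipartiteness. Part~\ref{main-thm_1} of Proposition~\ref{prop-Delta-at-most-5} then guarantees that every such graph with at least one vertex has a good vertex. Feeding this into Lemma~\ref{lem-imp} (applied to $\cG$ = bipartite graphs of maximum degree $\le 5$) immediately yields that \eqref{kahn-bound}, i.e. ${\rm ind}(G) \le \Pi(G)$, holds for all bipartite $G$ with maximum degree at most $5$.

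The remaining content of Theorem~\ref{thm-non_bip_delta_5}, restricted to the bipartite case, is the characterization of equality. Here I would redo the induction of Lemma~\ref{lem-imp} but tracking when each inequality is tight. If ${\rm ind}(G) = \Pi(G)$ and $x$ is the good vertex supplied by Proposition~\ref{prop-Delta-at-most-5}(\ref{main-thm_1}), then chaining \eqref{ind-recurrsion}, \eqref{using_induction-1}, \eqref{using-xgood} forces equality in both \eqref{using_induction-1} and \eqref{using-xgood}. Equality in \eqref{using-xgood} is equality in \eqref{def-good}, which by the ``Furthermore'' clause of Proposition~\ref{prop-Delta-at-most-5} means the connected component of $x$ is a single vertex or a complete bipartite graph. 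Equality in \eqref{using_induction-1} means ${\rm ind}(G-x) = \Pi(G-x)$ and ${\rm ind}(G-x-N(x)) = \Pi(G-x-N(x))$, so the induction hypothesis describes the components of $G-x$ and of $G-x-N(x)$. One then argues that every component of $G$ other than that of $x$ is a component of $G-x$ (deleting $x$ doesn't touch those), hence each is a single vertex or complete bipartite; combined with the statement about $x$'s own component, every component of $G$ is of the required form. Conversely, if every component of $G$ is a single vertex or $K_{a,b}$, a direct computation (already noted after \eqref{kahn-bound}: $\Pi$ is multiplicative over components, and for $K_{a,b}$ one checks ${\rm ind}(K_{a,b}) = 2^a + 2^b - 1 = \Pi(K_{a,b})$, while an isolated vertex gives $2 = \Pi$) shows ${\rm ind}(G) = \Pi(G)$.

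One technical point to handle carefully: the base case and degenerate components. For the converse equality computation I should confirm that $\Pi(K_{a,b})^{\text{on its }ab\text{ edges}}$ really collapses to $2^a+2^b-1$ — each edge $uv$ has $d(u)=a$, $d(v)=b$, contributing $(2^a+2^b-1)^{1/(ab)}$, and there are $ab$ edges, so the product is exactly $2^a+2^b-1 = {\rm ind}(K_{a,b})$; the case $a=1$ or $b=1$ (a star, a single edge) and the isolated-vertex case should be spelled out since these are the boundary instances. The forward direction's induction needs the observation that a good vertex $x$ exists in \emph{every} bipartite graph of max degree $\le 5$ with $\ge 1$ vertex, which is exactly Proposition~\ref{prop-Delta-at-most-5}(\ref{main-thm_1}) — so no separate existence argument is needed.

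The main obstacle, such as it is, will be the bookkeeping in the equality characterization: one must be sure that ``the component of $x$ in $G$ is nice'' plus ``all components of $G-x$ are nice'' really forces ``all components of $G$ are nice,'' which requires noting that deleting $x$ can only break its own component apart and leaves all other components intact, so the inductively-controlled $G-x$ already certifies every component of $G$ disjoint from $x$. There is no hard inequality to prove here — all the analytic work is quarantined in Proposition~\ref{prop-Delta-at-most-5} — so this lemma is essentially an exercise in assembling the induction and the equality cases correctly.
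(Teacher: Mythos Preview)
Your proposal is correct and follows essentially the same approach as the paper: use Proposition~\ref{prop-Delta-at-most-5} together with Lemma~\ref{lem-imp} to get \eqref{kahn-bound}, then for the equality characterization chain \eqref{ind-recurrsion}--\eqref{using-xgood} and invoke the ``Furthermore'' clause of Proposition~\ref{prop-Delta-at-most-5}. The only cosmetic difference is that for the equality case the paper peels off the whole (now-certified) component of $x$ and iterates, whereas you delete just $x$ and use the induction hypothesis on $G-x$; both arguments are valid and amount to the same idea.
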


\begin{proof}
For a bipartite graphs $G$ with maximum degree at most $5$, Proposition \ref{prop-Delta-at-most-5} shows that $G$ contains a good vertex, so Lemma \ref{lem-imp} implies that \eqref{kahn-bound} holds. It remains to find the equality cases. Suppose that $G$ is a bipartite graph for which equality holds in \eqref{kahn-bound}. With $x$ chosen as in Proposition \ref{prop-Delta-at-most-5}, we have
\begin{align*}
\Pi(G) & =  {\rm ind}(G) \\
& =  {\rm ind}(G-x) + {\rm ind}(G-x-N(x)) \\
& \leq  \Pi(G-x) + \Pi(G-x-N(x)) \\
& \leq  \Pi(G)
\end{align*}
so that in fact $\Pi(G) = \Pi(G-x) + \Pi(G-x-N(x))$. Then the equality condition in Proposition \ref{prop-Delta-at-most-5} implies that the connected component of $x$ in $G$ is either an isolated vertex or a complete bipartite graph. Removing this component, the resulting graph $G'$ satisfies $\Pi(G') = {\rm ind}(G')$ so repeating the above argument we find that each connected component of $G$ is either a complete bipartite graph or a single vertex.
\end{proof}

\begin{lemma}
If Theorem \ref{thm-non_bip_delta_5} is true for bipartite graphs, then it is true in general.
\end{lemma}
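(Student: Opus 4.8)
The plan is to reduce to the bipartite case exactly via Zhao's bipartite swapping trick from \cite{Zhao}. Given an arbitrary graph $G$ with maximum degree at most $5$, I would pass to its bipartite double cover $G \times K_2$, whose vertex set is $V(G) \times \{0,1\}$ and in which $(u,0)(v,1)$ is an edge precisely when $uv \in E(G)$. This graph is bipartite (with sides $V(G)\times\{0\}$ and $V(G)\times\{1\}$), and each vertex $(u,i)$ has the same degree $d(u)$ that $u$ has in $G$; in particular $G \times K_2$ again has maximum degree at most $5$, so the assumed bipartite case of Theorem \ref{thm-non_bip_delta_5} applies to it.

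First I would record two facts. The first is the identity $\Pi(G \times K_2) = \Pi(G)^2$: each edge $uv \in E(G)$ lifts to exactly the two edges $(u,0)(v,1)$ and $(u,1)(v,0)$ of $G \times K_2$, each contributing the factor $\left(2^{d(u)}+2^{d(v)}-1\right)^{1/(d(u)d(v))}$, while each isolated vertex of $G$ lifts to two isolated vertices of $G \times K_2$; hence every factor of $\Pi(G)$ gets squared. The second is Zhao's inequality ${\rm ind}(G)^2 \le {\rm ind}(G \times K_2)$. Combining these with \eqref{kahn-bound} for the bipartite graph $G \times K_2$ gives
$$
{\rm ind}(G)^2 \ \le\ {\rm ind}(G \times K_2)\ \le\ \Pi(G \times K_2)\ =\ \Pi(G)^2,
$$
and taking square roots yields ${\rm ind}(G) \le \Pi(G)$, which is \eqref{kahn-bound} for $G$.

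For the equality characterization, suppose ${\rm ind}(G) = \Pi(G)$. Then the displayed chain collapses, so ${\rm ind}(G \times K_2) = \Pi(G \times K_2)$, and the (assumed) equality part of Theorem \ref{thm-non_bip_delta_5} for bipartite graphs tells us that every connected component of $G \times K_2$ is a single vertex or a complete bipartite graph. I would then translate this back: a connected component $C$ of $G$ lifts to two disjoint copies of $C$ inside $G \times K_2$ when $C$ is bipartite, and to a single connected bipartite graph on $2|V(C)|$ vertices when $C$ is not bipartite. In the bipartite case, each copy of $C$ must then be a single vertex or a complete bipartite graph, hence so is $C$. In the non-bipartite case $C \times K_2$ would have to be some $K_{a,b}$; but being connected and bipartite it has a unique bipartition, which is $\bigl(V(C)\times\{0\},\,V(C)\times\{1\}\bigr)$, forcing $a=b=|V(C)|$ and hence every vertex $(u,i)$ to have degree $|V(C)|$ — impossible in a simple graph on $2|V(C)|$ vertices. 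So no component of $G$ is non-bipartite, and every component is a single vertex or a complete bipartite graph; the converse (that such $G$ attain equality) is immediate from the remark following \eqref{kahn-bound}.

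The proof is essentially bookkeeping once Zhao's swapping trick is quoted; the only step that needs genuine care is the final parity/degree argument ruling out non-bipartite components in the equality analysis, and even that is short. I expect that to be the main (small) obstacle, together with being careful that $\Pi$ multiplies correctly across the double cover.
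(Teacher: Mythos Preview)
Your argument is correct and follows the same route as the paper: pass to the bipartite double cover $G\times K_2$, observe that $\Pi(G\times K_2)=\Pi(G)^2$, and apply Zhao's inequality ${\rm ind}(G)^2\le{\rm ind}(G\times K_2)$ together with the assumed bipartite case. The only real difference is in the equality analysis. The paper quotes Zhao's result in its full form --- equality in ${\rm ind}(G)^2\le{\rm ind}(G\times K_2)$ holds \emph{iff} $G$ is bipartite --- so for non-bipartite $G$ the chain is strict from the outset and there is nothing to analyse. You instead use only the inequality and then argue structurally that a non-bipartite component $C$ cannot have $C\times K_2$ complete bipartite. That works, but your final clause ``impossible in a simple graph on $2|V(C)|$ vertices'' is not quite right as written: $K_{n,n}$ is a perfectly good simple graph on $2n$ vertices with all degrees $n$. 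The correct reason is that the degree of $(u,i)$ in $C\times K_2$ equals $d_C(u)\le |V(C)|-1$ (equivalently, $(u,0)\not\sim(u,1)$ since $C$ is loopless), so $C\times K_2$ can never be $K_{|V(C)|,|V(C)|}$. With that one-word fix your argument is complete; quoting Zhao's equality condition, as the paper does, simply short-circuits this last step.
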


\begin{proof}
Zhao \cite{Zhao} showed that
\begin{equation} \label{zhao-obsv}
{\rm ind}(G)^2 \leq {\rm ind}(G \times K_2)
\end{equation}
with equality if and only if $G$ is bipartite. Here $\times$ means tensor product (so $G \times K_2$ is the graph on vertex set $\{(v,i): V \in V(G), i \in \{0,1\}\}$ with $(u,i) \sim (v,j)$ if and only if $uv \in E(G)$ and $i \neq j$).

Now let $G$ be a non-bipartite graph with maximum degree at most $5$. Each edge $uv$ of $G$ with $d(u)=a$ and $d(v)=b$ gives rise to two edges $u'v''$ and $u''v'$ in $G \times K_2$ with $d(u')=d(u'')=a$ and $d(v')=d(v'')=b$, each isolated vertex of $G$ gives rise to two isolated vertices of $G \times K_2$, and $G \times K_2$ is a bipartite graph with maximum degree at most $5$. Combining these observations with \eqref{zhao-obsv} and Theorem \ref{thm-non_bip_delta_5} for bipartite graphs, we obtain
\begin{eqnarray*}
{\rm ind}(G)^2 & < & {\rm ind}(G \times K_2) \\
& \leq & 2^{{\rm iso}(G \times K_2)}\prod_{uv \in E(G \times K_2)} \left(2^{d(u)}+2^{d(v)}-1\right)^{\frac{1}{d(u)d(v)}} \\
& = & \left(2^{{\rm iso}(G)}\prod_{uv \in E(G)} \left(2^{d(u)}+2^{d(v)}-1\right)^{\frac{1}{d(u)d(v)}}\right)^2,
\end{eqnarray*}
as required. In particular, equality never occurs for non-bipartite graphs.
\end{proof}

Therefore, Theorem \ref{thm-non_bip_delta_5} reduces to Proposition \ref{prop-Delta-at-most-5}, which we prove in the next section.

\section{Proof of Proposition \ref{prop-Delta-at-most-5}} \label{sec-proof}

We begin with some notation. For $a, b > 0$ set
$$
f(a,b) = \left(2^a + 2^b -1\right)^{\frac{1}{ab}}.
$$
Note that $f(a,b)^{ab} = {\rm ind}(K_{a,b})$.
Here is a useful fact about the function $f(\cdot,\cdot)$. For all $0 < a' < a \leq 5$ and $0 < b' < b \leq 5$,
\begin{equation} \label{f(a,b)_fact_1}
\frac{f(a-a',b)f(a,b-b')}{f(a-a',b-b')f(a,b)} \geq 1.
\end{equation}
This relation in fact holds for all $a, b$, but in the sequel we only need it for $a, b \leq 5$, for which it is easily checked by hand.

\medskip

Fix a reference vertex $x \in V(G)$. Say that a vertex at distance $i$ from $x$ is a {\em level $i$} vertex (distance measured by number of vertices in a shortest path).
Say that an edge of $G$ is an {\em $ij$-edge} (with $i \leq j$) if it joins a level $i$ vertex and a level $j$ vertex (necessarily $j=i+1$; recall that $G$ is bipartite). Write $E_{ij}$ for the set of $ij$-edges. Adopt the convention of writing an $ij$-edge
as an ordered pair $(u,v)$ with the level of $u$ smaller than the level of $v$.

Write $G'$ for the component of $G$ that includes $x$, and $\Pi(G,x)$ for the contribution to $\Pi(G)$ from all other components. We have
$$
\Pi(G) = \Pi(G,x) 2^{{\rm iso}(G')}\prod_{ij \in \{01, 12, 23, 34, \ldots\}} \prod_{(u,v)
\in E_{ij}} f(d(u),d(v)).
$$
This is valid both when $G'$ contains more than one vertex (in which case ${\rm iso}(G')=0$) and when $G'=\{x\}$ (in which case ${\rm iso}(G')=1$). We also have
\begin{eqnarray*}
\Pi(G-x) & = & \Pi(G,x)2^{{\rm iso}(G'-x)}\prod_{(u,v) \in E_{12}} f(d(u)-1,d(v)) \times \\
& & ~~~~~~~~~~~~~~~~~~~~~~~~~~~~\prod_{ij \in \{23, 34, \ldots \}} \prod_{(u,v) \in E_{ij}}
f(d(u),d(v))
\end{eqnarray*}
and
\begin{eqnarray*}
\Pi(G-x-N(x)) & = & \Pi(G,x)2^{{\rm iso}(G'-x-N(x))} \times \\
& & ~~~~~~~~~~~~~ \prod_{(u,v) \in E_{23}} f(d(u)-d_{N(x)}(u),d(v)) \times \\
& & ~~~~~~~~~~~~~ \prod_{ij \in \{34, \ldots \}} \prod_{(u,v) \in E_{ij}}
f(d(u),d(v)),
\end{eqnarray*}
where $d_{N(x)}(u)$ denotes the number of neighbours of $u$ that are
contained in $N(x)$.

The condition that $x$ is good for $G$ is, in the notation we have just established,
\begin{align}
& 2^{{\rm iso}(G')}
\prod_{ij \in \{01,12,23\}} \prod_{(u,v) \in E_{ij}} f(d(u),d(v)) \nonumber \\
& \qquad \geq
2^{{\rm iso}(G'-x)}
\prod_{(u,v) \in E_{12}} f(d(u)-1,d(v))
 \prod_{(u,v) \in E_{23}} f(d(u),d(v)) \nonumber  \\
& \qquad \qquad +
2^{{\rm iso}(G'-x -N(x))}
\prod_{(u,v) \in E_{23}} f(d(u)-d_{N(x)}(u),d(v)).  \label{to_prove}
\end{align}
All terms involving $ij$-edges with $i \geq 3$ cancel out, as do all terms involving vertices at level $5$ or greater, or not in the same component as $x$.

\medskip

It is now clear that there is a finite process to verify \eqref{kahn-bound} for all $G \in {\cal G}^{\rm bipartite}(\Delta)$, the set of bipartite graphs with maximum degree at most $\Delta$, since there are only finitely many cases (the number depending on $\Delta$) for which \eqref{to_prove} needs to be checked. Indeed, we only need to check all possible configurations within an edge-radius of 4 about $x$. However, for $\Delta=5$, the number of cases is already too large to do an exhaustive search, so we need to make some observations that reduce the search.

\begin{lemma} \label{lem-obs_1}
Fix $G \in {\cal G}^{\rm bipartite}(\Delta)$ (with $\Delta \leq 5$) and $x \in V(G)$. Let $G'$ be obtained from $G$ by deleting all level $5$ and greater vertices and all vertices not in the same component as $x$, and adding edges (and level 4 vertices) to increase the degrees of all level $3$ vertices to $\Delta$ (without changing the degrees of level $0$, $1$ and $2$ vertices). If $x$ is good for $G'$ then it is good for $G$.
\end{lemma}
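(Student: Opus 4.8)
The plan is to compare the ``good'' inequality \eqref{to_prove} for $G$ with the same inequality for the modified graph, which I denote $H$ (the lemma calls it $G'$; since $H$ is connected it is its own component of $x$). The key point is that passing from $H$ to $G$ changes \emph{only} the numbers $d(v)$ attached to level $3$ vertices $v$, and changes them only by possibly decreasing them from $\Delta$ to $d_G(v)\le\Delta$.

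First I would record what does not change. Deleting level $5$ and greater vertices, deleting the other components, and adding edges from level $3$ vertices to new level $4$ vertices all leave unchanged: the degrees of every level $0,1,2$ vertex; the edge sets $E_{01},E_{12},E_{23}$ of the component of $x$; the numbers $d_{N(x)}(u)$ for level $2$ vertices $u$; and the isolated-vertex counts of the component of $x$, of it minus $x$, and of it minus $x$ and $N(x)$. The last is the only fiddly check: an isolated vertex of (component of $x$)$-x$ is exactly a level $1$ vertex of degree $1$, an isolated vertex of (component of $x$)$-x-N(x)$ is exactly a level $2$ vertex all of whose neighbours lie in $N(x)$, and the new level $4$ vertices (each adjacent to a surviving level $3$ vertex) are isolated in none of these graphs — none of which the modification touches. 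Consequently, in the notation of \eqref{to_prove}, the statement that $x$ is good reads, for $G$,
$$ (A-B)\prod_{(u,v)\in E_{23}}f\bigl(d(u),d_G(v)\bigr)\ \geq\ C\prod_{(u,v)\in E_{23}}f\bigl(d(u)-d_{N(x)}(u),\,d_G(v)\bigr), $$
and for $H$ it reads the same with each $d_G(v)$ replaced by $\Delta$, where
$$ A=2^{{\rm iso}_0}\prod_{(u,v)\in E_{01}}\!\!f\bigl(d(u),d(v)\bigr)\prod_{(u,v)\in E_{12}}\!\!f\bigl(d(u),d(v)\bigr),\quad B=2^{{\rm iso}_1}\prod_{(u,v)\in E_{12}}\!\!f\bigl(d(u)-1,d(v)\bigr),\quad C=2^{{\rm iso}_2} $$
are the \emph{same} positive constants in the two cases (here ${\rm iso}_0,{\rm iso}_1,{\rm iso}_2$ are the three isolated-vertex counts above) and $d(u)$ always denotes a fixed level $2$ degree.

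Now assume $x$ is good for $H$, so the $\Delta$-version of the first display holds. Its right-hand side equals $C$ times a product of positive reals, hence is positive, so the left-hand side is positive and therefore $A-B>0$; this eliminates any case analysis. Dividing by $A-B$, it suffices to prove
$$ \prod_{(u,v)\in E_{23}}\frac{f\bigl(d(u)-d_{N(x)}(u),\,d_G(v)\bigr)}{f\bigl(d(u),d_G(v)\bigr)}\ \leq\ \prod_{(u,v)\in E_{23}}\frac{f\bigl(d(u)-d_{N(x)}(u),\,\Delta\bigr)}{f\bigl(d(u),\Delta\bigr)}, $$
since the hypothesis says the right-hand side is at most $(A-B)/C$. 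I would do this factor by factor. Fix $(u,v)\in E_{23}$ and set $a=d(u)$, $c=d_{N(x)}(u)$. As $u$ is a level $2$ vertex with the level $3$ neighbour $v$, at least one neighbour of $u$ lies outside $N(x)$, so $1\le a-c<a\le\Delta\le 5$, and also $1\le d_G(v)\le\Delta\le 5$. If $c=0$ both factors are $1$; if $c\ge 1$ and $d_G(v)<\Delta$, the required inequality $f(a-c,d_G(v))/f(a,d_G(v))\le f(a-c,\Delta)/f(a,\Delta)$ is exactly \eqref{f(a,b)_fact_1} with $a'=c$, $b=\Delta$, $b'=\Delta-d_G(v)$; if $d_G(v)=\Delta$ it is an equality. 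Multiplying over $E_{23}$ finishes the proof.

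The only real content is the last step — seeing that the surviving inequality is an instance of the ``log-supermodularity'' relation \eqref{f(a,b)_fact_1}. Everything else is bookkeeping, and I expect the count of isolated vertices of (component of $x$)$-x-N(x)$ to be the easiest place to slip up; the observation $A-B>0$ is the small trick that keeps the argument case-free.
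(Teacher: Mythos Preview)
Your argument is correct and rests on the same key fact as the paper's, the log-supermodularity inequality \eqref{f(a,b)_fact_1}; the bookkeeping (what survives the modification, why the isolated-vertex counts agree, why $d(u)-d_{N(x)}(u)\ge 1$ for $(u,v)\in E_{23}$) is all handled cleanly.

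The organization differs slightly from the paper's. The paper proceeds incrementally: it raises the degree of a single level~$3$ vertex by one (adding one new level~$4$ pendant), shows that goodness is inherited under that one step, and then iterates. In that setup the $A$- and $B$-terms of \eqref{to_prove} pick up the \emph{same} multiplicative factor while the $C$-term picks up a larger one, so $A'\ge B'+C'$ immediately gives $A\ge B+C$ without ever needing $A-B>0$. You instead compare $G$ and $H$ in one shot, rewriting \eqref{to_prove} as $(A-B)\prod_{E_{23}}f(d(u),d(v))\ge C\prod_{E_{23}}f(d(u)-d_{N(x)}(u),d(v))$; this is more direct but requires the small extra observation that the hypothesis forces $A-B>0$ (which you supply). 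Both routes invoke \eqref{f(a,b)_fact_1} in the same way --- the paper with $b'=1$ repeatedly, you with $b'=\Delta-d_G(v)$ once per $23$-edge --- so the mathematical content is the same.
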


\begin{proof}
Let $v_1, \ldots, v_k$ be the level $3$ vertices of $G$ and let $a_j$ be the degree of $v_j$ for $1 \leq j \leq k$. If $a_j=\Delta$ for all $j$, then there is nothing to prove, as \eqref{to_prove} is then identical for both $G$ and $G'$. Otherwise, we have $a_j < \Delta$ for some $j$, without loss of generality $j=1$. We consider the graph $G''$ obtained from $G$ by deleting all level $5$ and greater vertices and all vertices not in the same component as $x$, and adding a single edge (and a single level 4 vertex) to increase the degree of vertex $v_1$ to $a_1+1$ (without changing the degrees of level $0$, $1$ and $2$ vertices). We claim that if $x$ is good for $G''$ then it is good for $G$; the lemma clearly follows from this by repeated application.

To see the claim, let the level 2 neighbours of $v_1$ have degrees $b_1, \ldots, b_\ell$ with the $j$th level $2$ neighbour having $b_j'$ level $1$ neighbours. By hypothesis we have $A' \geq B' + C'$ where $A'$ is the left-hand side of \eqref{to_prove} (applied to $G''$), $B'$ is the first term on the right-hand side and $C'$ is the second term. If $A, B$ and $C$ are the left-hand side and first and second terms of the right-hand side of \eqref{to_prove} (applied to $G$) then we have
$$
A'=A \prod_{i=1}^{\ell} \frac{f(b_i,a_1+1)}{f(b_i,a_i)},
$$
$$
B'=B \prod_{i=1}^{\ell} \frac{f(b_i,a_1+1)}{f(b_i,a_i)}
$$
and
$$
C'=C \prod_{i=1}^{\ell} \frac{f(b_i-b_i',a_1+1)}{f(b_i-b_i',a_1)}
$$
and so
$$
A \geq B + C \prod_{i = 1}^\ell \frac{f(b_i-b_i',a_1+1)f(b_i,a_i)}{f(b_i-b_i',a_1)f(b_i,a_1+1)}.
$$
So we get \eqref{to_prove} for $G$ from \eqref{f(a,b)_fact_1}.
\end{proof}

The next observation is a restatement of Proposition \ref{prop-Delta-at-most-5} (statement \ref{main-thm_3}).
\begin{lemma} \label{lem-regular}
If $G$ is $d$-regular and bipartite then any $x \in V(G)$ is good for $G$. There is equality in \eqref{to_prove} if and only if the component of $x$ is isomorphic to $K_{d,d}$.
\end{lemma}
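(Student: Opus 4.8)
The plan is to write the good‑vertex condition \eqref{to_prove} out explicitly in the $d$‑regular bipartite case, where every vertex has degree $d$ and the only data left is the collection of numbers $c_u:=d_{N(x)}(u)$ attached to the level‑$2$ vertices $u$. If $d=1$ the component $G'$ of $x$ is $K_{1,1}$ and \eqref{to_prove} reads $3\ge 2+1$, so assume $d\ge 2$; then $G'$ has more than one vertex, deleting $x$ lowers the degree of each level‑$1$ vertex only to $d-1\ge 1$, so ${\rm iso}(G')={\rm iso}(G'-x)=0$, whereas ${\rm iso}(G'-x-N(x))$ equals the number $s$ of level‑$2$ vertices $u$ with $c_u=d$ (exactly those lose all their neighbours when $N(x)$ is deleted). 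Since $|E_{01}|=d$, $|E_{12}|=d(d-1)$, and each level‑$2$ vertex $u$ sends exactly $d-c_u$ edges into level $3$, condition \eqref{to_prove} becomes
\[
f(d,d)^{d^2+|E_{23}|}\;\ge\;f(d-1,d)^{d(d-1)}f(d,d)^{|E_{23}|}\;+\;2^{s}\prod_{u:\,c_u<d}f(d-c_u,d)^{d-c_u}.
\]
Using $f(d,d)^{d^2}={\rm ind}(K_{d,d})=2^{d+1}-1$ and $f(d-1,d)^{d(d-1)}={\rm ind}(K_{d-1,d})=2^{d-1}+2^d-1$, the difference $f(d,d)^{d^2}-f(d-1,d)^{d(d-1)}$ collapses to $2^{d-1}$, and since $|E_{23}|=\sum_{u:\,c_u<d}(d-c_u)$ the inequality above is equivalent to
\[
2^{d-1}\;\ge\;2^{s}\prod_{u:\,c_u<d}\left(\frac{f(d-c_u,d)}{f(d,d)}\right)^{d-c_u}.
\]

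Next I would feed in the one global constraint, namely $\sum_u c_u=|E_{12}|=d(d-1)$ (sum over all level‑$2$ vertices). Writing $n_u:=d-c_u\in\{1,\dots,d-1\}$ for the $r$ level‑$2$ vertices with $c_u<d$ and letting $p$ be the total number of level‑$2$ vertices, the constraint becomes $\sum_u n_u=d(p-d+1)$, with $p-d+1\ge 0$. Raising the last inequality to the $d$‑th power, substituting $f(n,d)^{dn}=2^n+2^d-1$ and $f(d,d)^{dn}=(2^{d+1}-1)^{n/d}$, and performing the routine rearrangement in which the identity $\sum_u n_u=d(p-d+1)$ is used to absorb all stray powers of $2$ and of $2^{d+1}-1$ into the product, the claim reduces to
\[
\prod_{u:\,c_u<d}2^{d}(2^{d+1}-1)^{n_u/d}\;\ge\;\prod_{u:\,c_u<d}2^{n_u}\bigl(2^{n_u}+2^d-1\bigr),
\]
and this it suffices to prove term by term: for every integer $n$ with $1\le n\le d-1$,
\[
2^{d-n}(2^{d+1}-1)^{n/d}\;\ge\;2^n+2^d-1.
\]

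To dispatch this last inequality (and with it the main inequality of the lemma), I would use a convexity argument that needs no case analysis in $d$. Put $q=2^d$ and $t=n/d\in(0,1)$; the inequality is $q^{1-t}(2q-1)^{t}\ge q^{t}+q-1$. Taking logarithms, its left‑hand side is the affine function $(1-t)\log q+t\log(2q-1)$ of $t$, which is precisely the chord of $h(t):=\log(q^{t}+q-1)$ joining $t=0$ (where both sides equal $\log q$) to $t=1$ (where both sides equal $\log(2q-1)$). A direct computation gives $h''(t)=(\log q)^2\,q^{t}(q-1)/(q^{t}+q-1)^2>0$, so $h$ is strictly convex on $[0,1]$ and hence lies strictly below its chord for $t\in(0,1)$; this yields the required inequality, strict whenever $n/d\in(0,1)$, i.e.\ for all $1\le n\le d-1$.

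For the equality statement: equality in \eqref{to_prove} is equivalent to equality in the displayed product inequality, hence (all factors positive, each left factor dominating the corresponding right one) to $2^{d-n_u}(2^{d+1}-1)^{n_u/d}=2^{n_u}+2^d-1$ for every level‑$2$ vertex $u$ with $c_u<d$; by the strictness just established there can be no such $u$, so every level‑$2$ vertex is adjacent to all of $N(x)$, which together with $d$‑regularity forces $G'$ to be bipartite with classes $\{x\}\cup(\text{level }2)$ and $N(x)$, each of size $d$ and complete between them, i.e.\ $G'\cong K_{d,d}$; conversely $G'\cong K_{d,d}$ gives $r=0$ and equality (this is just ${\rm ind}(K_{d,d})={\rm ind}(K_{d-1,d})+2^{d-1}$). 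The only place demanding real care is the level‑ and isolated‑vertex bookkeeping of the first paragraph; once \eqref{to_prove} has been reduced to the two‑line inequality $2^{d-1}\ge 2^{s}\prod(\cdots)$, the one genuine idea is the convexity observation above, and everything after it is mechanical.
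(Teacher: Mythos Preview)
Your proof is correct. The setup and reduction of \eqref{to_prove} to an inequality involving only the level-$2$ data are exactly as in the paper (your $n_u$ are the paper's $x_i$, your $p$ is the paper's $k$, and your target inequality, after raising to the $d$th power, is precisely the paper's \eqref{to_prove-regular'}). The difference is in the endgame.

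The paper proves \eqref{to_prove-regular'} by a discrete smoothing argument: it observes that $g(x)=2^d+2^x-1$ has $g(x+1)/g(x)$ strictly increasing, so replacing $(x_i,x_j)$ by $(x_i+1,x_j-1)$ whenever $x_i\ge x_j$ strictly increases $\prod_i g(x_i)$; iterating pushes any admissible sequence to the extreme profile $(d,\dots,d,0,\dots,0)$, where the product equals the left-hand side. You instead use the constraint $\sum_u n_u = d(p-d+1)$ to rewrite the inequality as a product of per-vertex comparisons and then prove each factor $2^{d-n}(2^{d+1}-1)^{n/d}\ge 2^n+2^d-1$ by the chord-versus-convex-function inequality for $h(t)=\log(2^{dt}+2^d-1)$. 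Both arguments are really the same log-convexity of $g$ (your $h''>0$ is the continuous form of the paper's ``$g(x+1)/g(x)$ increasing''), but your term-by-term reduction is a genuine streamlining: it avoids the global majorization step and makes the equality analysis immediate, since strictness of each factor for $1\le n\le d-1$ forces $r=0$ directly. The paper's smoothing, on the other hand, makes it transparent that the extremal configuration is $(d,\dots,d,0,\dots,0)$ and would adapt more readily if one wanted to identify near-extremal configurations.
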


\begin{proof}
Pick any $x \in V(G)$.
Each of the $d$ level $1$ vertices has $d-1$ level 2 neighbours, so there are in all $k$ level 2 vertices for some $d-1 \leq k \leq d(d-1)$. (The two extreme cases, $k=d-1$ and $k=d(d-1)$, correspond respectively to $G=K_{d,d}$ and to $G$ not having a 4-cycle through $x$). Label the level 2 vertices $v_1, \ldots, v_k$, and for each $v_i$ let $x_i=d-d_{N(x)}(v_i)$ (so $x_i$ is the number of level 3 neighbours that $v_i$ has). Note that $G$ has $d$ $01$-edges, $d(d-1)$ $12$-edges, and $kd$ edges leaving level 2 vertices, so $kd-d(d-1)$ $23$-edges. The left-hand side of \eqref{to_prove} is
$$
f(d,d)^{d+d(d-1)+kd-d(d-1)} = f(d,d)^{kd+d},
$$
the first term on the right-hand side is
$f(d-1,d)^{d(d-1)}f(d,d)^{kd-d(d-1)}$
and the second term is
$$
\prod_{i=1}^k f(x_i,d)^{x_i} = \prod_{i=1}^k \left(2^d + 2^{x_i} - 1\right)^\frac{1}{d}
$$
(where we have adopted the convention $f(0,d)^0=2$). To get \eqref{to_prove}, then, it is enough to verify
\begin{equation} \label{to_prove-regular}
f(d,d)^d \geq \left(\frac{f(d-1,d)}{f(d,d)}\right)^{d(d-1)} + \frac{\prod_{i=1}^k \left(2^d + 2^{x_i} - 1\right)^\frac{1}{d}}{f(d,d)^{kd}}
\end{equation}
for all $d-1 \leq k \leq d(d-1)$ and for all $(x_1,\ldots, x_k) \in \{0, \ldots, d-1\}^k$ with $\sum_{i=1}^k x_i = kd-d(d-1)$.

In the extreme case $k=d-1$, \eqref{to_prove-regular} becomes
$$
\left(2^{d+1}-1\right)^\frac{1}{d} \geq \frac{2^d + 2^{d-1} - 1}{\left(2^{d+1}-1\right)^\frac{d-1}{d}} +  \frac{2^{d-1}}{\left(2^{d+1}-1\right)^\frac{d-1}{d}}
$$
which holds with equality. Since only the third term of \eqref{to_prove-regular} involves $k$, what we have to show is that
$$
\frac{2^{d-1}}{\left(2^{d+1}-1\right)^\frac{d-1}{d}} \geq \frac{\prod_{i=1}^k \left(2^d + 2^{x_i} - 1\right)^\frac{1}{d}}{\left(2^{d+1}-1\right)^{\frac{k}{d}}}
$$
or, equivalently,
\begin{equation} \label{to_prove-regular'}
\left(2^{d+1}-1\right)^{k-(d-1)} 2^{d(d-1)}\geq \prod_{i=1}^k \left(2^d + 2^{x_i} - 1\right)
\end{equation}
for all $d-1 \leq k \leq d(d-1)$ and for all $(x_1,\ldots, x_k) \in \{0, \ldots, d-1\}^k$ with $\sum_{i=1}^k x_i = kd-d(d-1)$ (and, without loss of generality, $x_1 \geq \ldots \geq x_k$), with equality only if all $x_i=0$ (and $k=d-1$).

For a sequence $(x_1, \ldots, x_k)$, set
$g(x_1, \ldots, x_k) = \prod_{i=1}^k \left(2^d + 2^{x_i} - 1\right)$.
An equivalent formulation of \eqref{to_prove-regular'} is that for all sequences $(x_1, \ldots, x_k)$ that are valid for the right-hand side of \eqref{to_prove-regular'},
$$
g(d, \ldots, d, 0, \ldots, 0) \geq g(x_1, \ldots, x_k)
$$
where there are $k-(d-1)$ $d$'s and $d-1$ $0$'s in the string on the left-hand side, with equality only if $k=d-1$ and all $x_i=0$. By a sequence of moves in which two terms $x_i$ and $x_j$ with $x_i \geq x_i$ are replaced by $x_i+1$ and $x_j-1$, it is possible to transform any valid sequence $(x_1, \ldots, x_k)$ into $(d, \ldots, d, 0, \ldots, 0)$. So \eqref{to_prove-regular'} follows from the observation that the function $g(x)=2^d + 2^x -1$ (defined on positive integers) has the property that $\frac{g(x+1)}{g(x)}$ is strictly increasing in $x$ and so $g(x_i+1)g(x_j-1) > g(x_i)g(x_j)$ for $x_i \geq x_j$.
\end{proof}

Armed with Lemmas \ref{lem-obs_1} and \ref{lem-regular}, we are now in a position to complete the proof of Proposition \ref{prop-Delta-at-most-5}. Statement \ref{main-thm_2} yields to a direct search.
\begin{lemma} \label{lem-Delta-4-search'}
Let $x$ be any vertex of maximum degree in $G \in {\cal G}^{\rm bipartite}(4)$. Then $x$ is good for $G$. There is equality in \eqref{to_prove} if and only if $x$ is an isolated vertex, or the component of $x$ is a complete bipartite graph.
\end{lemma}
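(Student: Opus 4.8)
The plan is to combine Lemma~\ref{lem-obs_1} with a finite computer search. By Lemma~\ref{lem-obs_1} (applied with $\Delta=4$) it suffices to prove that $x$ is good for $G$ under the extra hypothesis that every level-$3$ vertex of the component $G'$ of $x$ has degree exactly $4$; since \eqref{to_prove} involves only the part of $G$ within edge-distance $4$ of $x$, we may also discard every vertex outside $G'$ and every vertex at level $5$ or beyond. Write $d_0=d(x)$; because $x$ has maximum degree, every level-$0$, $1$ or $2$ vertex has degree at most $d_0\le 4$. The key preliminary observation is that the left-hand side of \eqref{to_prove} and its two right-hand terms are completely determined by: the value $d_0$; the bipartite graph $H$ between $N(x)$ and the set of level-$2$ vertices; and, for each level-$2$ vertex $w$, its total degree $d(w)\in\{d_H(w),\dots,4\}$ (which records the number $d(w)-d_H(w)$ of level-$3$ neighbours of $w$). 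Indeed the product over $E_{23}$-edges collapses to $\prod_w f(d(w),4)^{d(w)-d_H(w)}$ on the left-hand side and in the first right-hand term, and to $\prod_w f(d(w)-d_H(w),4)^{d(w)-d_H(w)}$ in the second, while the three isolated-vertex exponents are also read off from this data: $\mathrm{iso}(G')=0$ once $d_0\ge 1$; a level-$1$ vertex is isolated in $G'-x$ iff it has degree $1$; and a level-$2$ vertex $w$ is isolated in $G'-x-N(x)$ iff $d(w)=d_H(w)$.

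There are only finitely many such configurations, and --- unlike the case $\Delta=5$, where the analogous count is too large and the minimum-degree device of statement~\ref{main-thm_1} is needed in addition --- few enough that I would simply verify \eqref{to_prove} for each of them by computer, flagging the tight ones. When $d_0=0$ the component is a single vertex and \eqref{to_prove} is a trivial equality. For $1\le d_0\le 4$ I expect the search to report that \eqref{to_prove} is strict in every configuration except the one coming from a complete bipartite component: $d_0=b$, with $x$ having $b$ level-$1$ neighbours each of degree $a\le b$, exactly $a-1$ level-$2$ vertices each of degree $b$, complete adjacency between levels $1$ and $2$, and no level-$3$ vertex; in that configuration every level-$1$ and level-$2$ vertex already sits at full degree, so $G'$ has no room for further vertices or edges and $G'\cong K_{a,b}$.

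For the equality statement, one direction is immediate: if $G'$ is a single vertex or $G'\cong K_{a,b}$ then \eqref{def-good} --- equivalently \eqref{to_prove} --- holds with equality, as noted after \eqref{def-good}. For the converse, suppose \eqref{to_prove} is an equality for $G$, and suppose toward a contradiction that some level-$3$ vertex $v$ of $G'$ has degree less than $4$. Form $G''$ from $G$ by attaching one new pendant vertex at $v$; then $G$ and $G''$ yield the same configuration under the reduction of Lemma~\ref{lem-obs_1}, on which $x$ is good by the search, so by Lemma~\ref{lem-obs_1} the vertex $x$ is good for $G''$. Running the single-step estimate from the proof of Lemma~\ref{lem-obs_1} then gives
\[
A \ \ge\ B + C\!\!\prod_{w}\frac{f(d(w)-d_H(w),\,d(v)+1)\,f(d(w),\,d(v))}{f(d(w)-d_H(w),\,d(v))\,f(d(w),\,d(v)+1)},
\]
where $A,B,C$ are the left side and the two right-hand terms of \eqref{to_prove} for $G$ and the product runs over the (at least one) level-$2$ neighbours $w$ of $v$. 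Each factor is of the form appearing in \eqref{f(a,b)_fact_1} with parameters $0<a'<a\le 5$ and $0<b'<b\le 5$, a range in which \eqref{f(a,b)_fact_1} holds \emph{strictly} (easily checked by hand), and $C>0$; hence $A>B+C$, contradicting equality. So every level-$3$ vertex of $G'$ (if any) already has degree $4$, $G'$ realises one of the enumerated configurations, and the search pins it down to the single-vertex or complete-bipartite case.

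The genuinely delicate part is making the enumeration both provably exhaustive and small: the reduction of Lemma~\ref{lem-obs_1}, together with the bound $d_0\le 4$ and the fact that level-$1$ and level-$2$ vertices have degree at most $d_0$ (which relies on $x$ being a maximum-degree vertex), is exactly what caps the number of configurations, and one must notice that a level-$2$ vertex $w$ contributes to \eqref{to_prove} only through the pair $(d(w),d_H(w))$, not through which level-$3$ vertices it meets. A secondary subtlety is the equality bookkeeping --- one must confirm that every configuration the computer flags as tight is realised only by an honest complete bipartite component (or an isolated vertex), which holds because such configurations are degree-saturated. The numerical verification of \eqref{to_prove} itself, once the configuration data is in hand, is routine.
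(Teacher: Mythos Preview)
Your proposal is correct and is essentially the paper's own approach: reduce via Lemma~\ref{lem-obs_1} to configurations in which every level-$3$ vertex has degree~$4$, observe that \eqref{to_prove} then depends only on the finite data $(d_0,H,\{d(w)\}_w)$ you describe, and finish by a computer search. The paper's proof is the one-line ``Exhaustive analysis of all possible situations that can arise in \eqref{to_prove}, always taking level $3$ vertices to have degree $4$,'' so you have recovered exactly this, with the added care of spelling out why the equality characterisation survives the reduction (your use of \emph{strict} inequality in \eqref{f(a,b)_fact_1} to rule out equality when some level-$3$ vertex has degree below $4$ is the right extra ingredient, and is implicit but unstated in the paper).
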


\begin{proof}
Exhaustive analysis of all possible situations that can arise in \eqref{to_prove}, always taking level $3$ vertices to have degree $4$.
\end{proof}

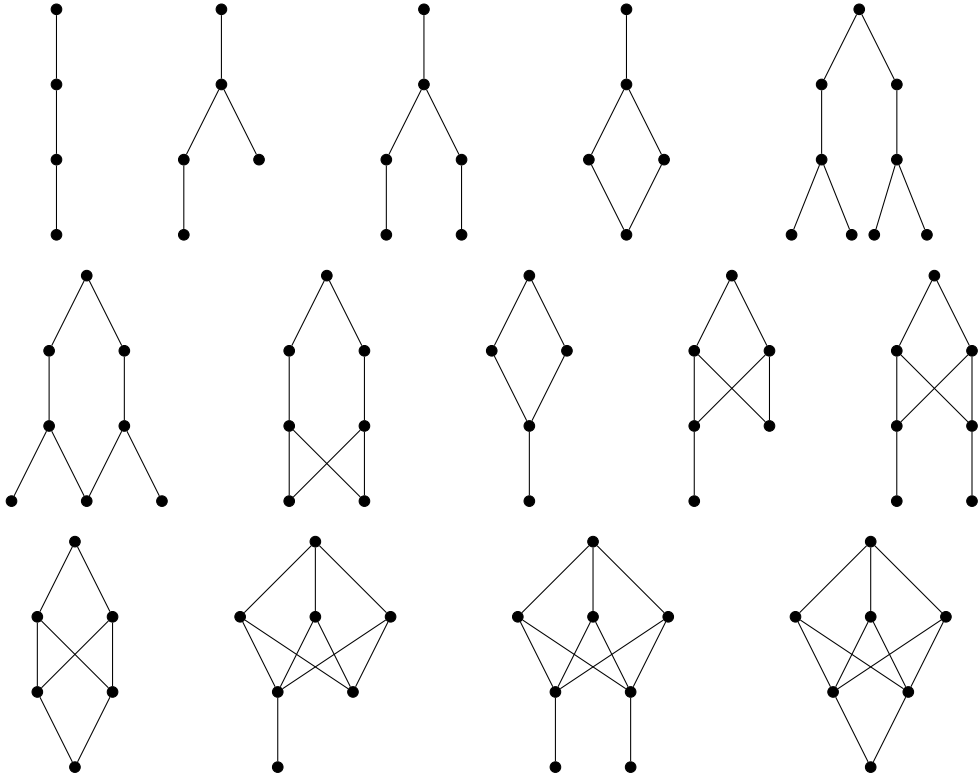
\begin{figure}[ht!] \centering	
	\newcommand{\hsep}{\hspace{1.4cm}}
	\begin{tikzpicture}
		\node[V] (0) at (0,0) {};
		\node[V] (1) at (0,-1) {} edge (0);
		\node[V] (2) at (0,-2) {} edge (1);
		\node[V] (3) at (0,-3) {} edge (2);
	\end{tikzpicture}
	\hsep
	\begin{tikzpicture}
		\node[V] (0) at (0,0) {};
		\node[V] (1) at (0,-1) {} edge (0);
		\node[V] (2) at (-.5,-2) {} edge (1);
		\node[V] (3) at (.5,-2) {} edge (1);
		\node[V] (4) at (-.5,-3) {} edge (2);
	\end{tikzpicture}
	\hsep
	\begin{tikzpicture}
		\node[V] (0) at (0,0) {};
		\node[V] (1) at (0,-1) {} edge (0);
		\node[V] (2) at (-.5,-2) {} edge (1);
		\node[V] (3) at (.5,-2) {} edge (1);
		\node[V] (4) at (-.5,-3) {} edge (2);
		\node[V] (5) at (.5,-3) {} edge (3);
	\end{tikzpicture}
	\hsep
	\begin{tikzpicture}
		\node[V] (0) at (0,0) {};
		\node[V] (1) at (0,-1) {} edge (0);
		\node[V] (2) at (-.5,-2) {} edge (1);
		\node[V] (3) at (.5,-2) {} edge (1);
		\node[V] (4) at (0,-3) {} edge (2) edge (3);
	\end{tikzpicture}
	\hsep
	\begin{tikzpicture}
		\node[V] (0) at (0,0) {};
		\node[V] (1) at (-.5,-1) {} edge (0);
		\node[V] (2) at (.5,-1) {} edge (0);
		\node[V] (3) at (-.5,-2) {} edge (1);
		\node[V] (4) at (.5,-2) {} edge (2);
		\node[V] (5) at (-0.9,-3) {} edge (3);
		\node[V] (6) at (-.1,-3) {} edge (3);
		\node[V] (7) at (.2,-3) {} edge (4);
		\node[V] (8) at (0.9,-3) {} edge (4);
	\end{tikzpicture}

	\vspace{10pt}
	\begin{tikzpicture}
		\node[V] (0) at (0,0) {};
		\node[V] (1) at (-.5,-1) {} edge (0);
		\node[V] (2) at (.5,-1) {} edge (0);
		\node[V] (3) at (-.5,-2) {} edge (1);
		\node[V] (4) at (.5,-2) {} edge (2);
		\node[V] (5) at (-1,-3) {} edge (3);
		\node[V] (6) at (-0,-3) {} edge (3) edge (4);
		\node[V] (7) at (1,-3) {} edge (4);
	\end{tikzpicture}
	\hsep
    \begin{tikzpicture}
		\node[V] (0) at (0,0) {};
		\node[V] (1) at (-.5,-1) {} edge (0);
		\node[V] (2) at (.5,-1) {} edge (0);
		\node[V] (3) at (-.5,-2) {} edge (1);
		\node[V] (4) at (.5,-2) {} edge (2);
		\node[V] (5) at (-.5,-3) {} edge (3) edge (4);
		\node[V] (6) at (.5,-3) {} edge (3) edge (4);
	\end{tikzpicture}
	\hsep
	\begin{tikzpicture}
		\node[V] (0) at (0,0) {};
		\node[V] (1) at (-.5,-1) {} edge (0);
		\node[V] (2) at (.5,-1) {} edge (0);
		\node[V] (3) at (0,-2) {} edge (1) edge (2);
		\node[V] (4) at (0,-3) {} edge (3);
	\end{tikzpicture}
    \hsep
	\begin{tikzpicture}
		\node[V] (0) at (0,0) {};
		\node[V] (1) at (-.5,-1) {} edge (0);
		\node[V] (2) at (.5,-1) {} edge (0);
		\node[V] (3) at (-.5,-2) {} edge (1) edge (2);
		\node[V] (4) at (.5,-2) {} edge (1) edge (2);
		\node[V] (5) at (-.5,-3) {} edge (3);
	\end{tikzpicture}
	\hsep
	\begin{tikzpicture}
		\node[V] (0) at (0,0) {};
		\node[V] (1) at (-.5,-1) {} edge (0);
		\node[V] (2) at (.5,-1) {} edge (0);
		\node[V] (3) at (-.5,-2) {} edge (1) edge (2);
		\node[V] (4) at (.5,-2) {} edge (1) edge (2);
		\node[V] (5) at (-.5,-3) {} edge (3);
		\node[V] (5) at (.5,-3) {} edge (4);
	\end{tikzpicture}

	\vspace{10pt}
	\begin{tikzpicture}
		\node[V] (0) at (0,0) {};
		\node[V] (1) at (-.5,-1) {} edge (0);
		\node[V] (2) at (.5,-1) {} edge (0);
		\node[V] (3) at (-.5,-2) {} edge (1) edge (2);
		\node[V] (4) at (.5,-2) {} edge (1) edge (2);
		\node[V] (5) at (0,-3) {} edge (3) edge (4);
	\end{tikzpicture}
	\hsep
	\begin{tikzpicture}
		\node[V] (0) at (0,0) {};
		\node[V] (1) at (-1,-1) {} edge (0);
		\node[V] (2) at (0,-1) {} edge (0);
		\node[V] (3) at (1,-1) {} edge (0);
		\node[V] (4) at (-.5,-2) {} edge (1) edge (2) edge (3);
		\node[V] (5) at (.5,-2) {} edge (1) edge (2) edge (3);
		\node[V] (6) at (-.5,-3) {} edge (4);
	\end{tikzpicture}
	\hsep
	\begin{tikzpicture}
		\node[V] (0) at (0,0) {};
		\node[V] (1) at (-1,-1) {} edge (0);
		\node[V] (2) at (0,-1) {} edge (0);
		\node[V] (3) at (1,-1) {} edge (0);
		\node[V] (4) at (-.5,-2) {} edge (1) edge (2) edge (3);
		\node[V] (5) at (.5,-2) {} edge (1) edge (2) edge (3);
		\node[V] (6) at (-.5,-3) {} edge (4);
		\node[V] (7) at (.5,-3) {} edge (5);
	\end{tikzpicture}
	\hsep
	\begin{tikzpicture} 	
		\node[V] (0) at (0,0) {};
		\node[V] (1) at (-1,-1) {} edge (0);
		\node[V] (2) at (0,-1) {} edge (0);
		\node[V] (3) at (1,-1) {} edge (0);
		\node[V] (4) at (-.5,-2) {} edge (1) edge (2) edge (3);
		\node[V] (5) at (.5,-2) {} edge (1) edge (2) edge (3);
		\node[V] (6) at (0,-3) {} edge (4) edge (5);
	\end{tikzpicture}
	\caption{The levels 0,1,2 and 3 appearances of the exceptional graphs in Lemma \ref{lem-Delta-5-search}. The graphs are drawn with $x$ as the top vertex.\label{fig:exception}}
\end{figure}

Now we consider the case when the maximum degree is at most $5$. We are not able to verify the analogue of Lemma \ref{lem-Delta-4-search'}, as there are too many cases to examine. To reduce the number of cases, we take as a working hypothesis that any minimum degree vertex in $G \in {\cal G}^{\rm bipartite}(5)$ is good. This turns out to be false, but only just.

\begin{lemma} \label{lem-Delta-5-search}
Let $G$ be a graph in ${\cal G}^{\rm bipartite}(5)$ that is not regular. Let $x \in V(G)$ be a vertex of minimum degree. Either $x$ is good for $G$ or the subgraph of $G$ induced by $x$ and all the level $1$, $2$ and $3$ vertices has one of the fourteen appearances shown in Figure \ref{fig:exception}.

There is equality in \eqref{to_prove} if and only if there are no level $3$ vertices, and the level $0$, $1$ and $2$ vertices together induce either a single vertex or a complete bipartite graph.
\end{lemma}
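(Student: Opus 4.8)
The plan is to turn the verification of~\eqref{to_prove} into a finite computation and then carry it out. First, by Lemma~\ref{lem-obs_1} it suffices to treat graphs in which every level~$3$ vertex has degree $\Delta=5$; since the operation of that lemma only adds edges and vertices at level~$4$ and deletes vertices at level $\ge5$ or outside the component of $x$, it leaves the subgraph induced by $x$ together with the level~$1$, $2$ and $3$ vertices unchanged, so this reduction is harmless for the statement we are proving. In the reduced situation both sides of~\eqref{to_prove} are determined by finite data: the degree $d_0:=d(x)$, which lies in $\{0,1,2,3,4\}$ because $x$ has minimum degree and $G$ is not regular (the case $d_0=0$ being the trivial isolated vertex, for which~\eqref{to_prove} reads $2\ge1+1$); the bipartite incidence graph between the $d_0$ level~$1$ vertices and the level~$2$ vertices; and the degrees of the level~$2$ vertices. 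Indeed, since $G$ is bipartite, each level~$1$ vertex $u$ has $x$ together with $d(u)-1\le4$ level~$2$ neighbours and nothing else, so there are at most $4d_0\le16$ level~$2$ vertices; each level~$2$ vertex $v$ has $d_{N(x)}(v)\ge1$ of its neighbours at level~$1$ and the remaining $d(v)-d_{N(x)}(v)$ at level~$3$; and minimality of $d(x)$ forces every level~$1$ or level~$2$ vertex to have degree in $\{d_0,\dots,5\}$. Since everything beyond level~$3$ cancels in~\eqref{to_prove} and the level~$3$ degrees are now fixed at $\Delta$, the list of configurations to be examined is finite and explicit.

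Second, I would enumerate these configurations --- organised by $d_0$, then by the multiset of level~$1$ degrees, then by the level-$1$--level-$2$ incidence graph, then by the level~$2$ degrees --- and for each one evaluate both sides of~\eqref{to_prove} numerically, using $f(a,b)^{ab}={\rm ind}(K_{a,b})=2^a+2^b-1$. For $\Delta\le4$ this can be done by hand, as in Lemma~\ref{lem-Delta-4-search'}, but for $\Delta=5$ the number of configurations is far too large, so the search is performed by computer. The expected output is that~\eqref{to_prove} holds in every configuration except the fourteen level-$0$-to-$3$ appearances drawn in Figure~\ref{fig:exception}, and that, among the configurations where~\eqref{to_prove} holds, equality occurs exactly when there are no level~$3$ vertices (so $E_{23}=\emptyset$) and the level~$0$, $1$, $2$ vertices induce a single vertex or a complete bipartite graph. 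The ``if'' half of the equality claim is already the remark following~\eqref{def-good}: when the component of $x$ is a single vertex or a $K_{a,b}$, \eqref{def-good} --- equivalently~\eqref{to_prove} --- holds with equality; the search supplies the ``only if'' half.

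Third, I would transfer the conclusions from the reduced case to an arbitrary $G$. The inequality transfers verbatim by Lemma~\ref{lem-obs_1}. For equality, one revisits the passage from $G$ to $G''$ in the proof of that lemma: there one obtains $A\ge B+C\prod_i\frac{f(b_i-b_i',a_1+1)f(b_i,a_1)}{f(b_i-b_i',a_1)f(b_i,a_1+1)}$, and each factor is an instance of~\eqref{f(a,b)_fact_1} with $a'=1$, $b'=b_i'$ that is \emph{strictly} larger than $1$ --- the level~$2$ vertex in question has a neighbour at level~$1$ (so $b_i'\ge1$) and a neighbour at level~$3$, namely the very vertex whose degree is being raised (so $b_i'<b_i$), so the hypotheses $0<a'<a\le5$, $0<b'<b\le5$ of the strict form of~\eqref{f(a,b)_fact_1} hold. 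Thus one such step turns a good vertex into a strictly good one, and hence, whenever the reduced graph has $x$ good, so does $G$, with strict inequality. Combining this with the reduced search gives, for every $G$ whose level-$0$-to-$3$ subgraph is not one of the fourteen exceptions, that $x$ is good and that equality in~\eqref{to_prove} forces $E_{23}=\emptyset$ together with the stated structure at levels $0,1,2$. The fourteen exceptional subgraphs are then disposed of by a small residual computation, letting their level~$3$ degrees range over $\{d_0,\dots,5\}$ and checking that none yields equality.

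The step I expect to be the main obstacle is the sheer size of the case analysis when $\Delta=5$: even after Lemma~\ref{lem-obs_1} has fixed the level~$3$ degrees, the combination of every level-$1$--level-$2$ incidence structure with every admissible degree assignment produces far too many configurations to check by hand, which is precisely why a computer is used. A secondary difficulty is the equality bookkeeping: one must carry strictness through Lemma~\ref{lem-obs_1} and through~\eqref{f(a,b)_fact_1}, and ensure the enumeration is exhaustive up to the natural symmetries (permutations of equivalent level~$1$ and level~$2$ vertices), so that no further exceptional or equality case is missed.
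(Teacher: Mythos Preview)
Your proposal is correct and follows essentially the same approach as the paper, which proves the lemma by exhaustive (computer) case analysis after invoking the reduction of Lemma~\ref{lem-obs_1}. Your write-up is considerably more detailed than the paper's one-line proof --- in particular in organising the enumeration by $d_0$, the level-$1$/level-$2$ incidence structure, and the level-$2$ degrees, and in tracking strictness through~\eqref{f(a,b)_fact_1} to transfer the equality analysis back from the reduced graph --- but the underlying method is the same finite search.
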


\begin{proof}
Again, exhaustive case analysis.
\end{proof}

To complete the proof of Proposition \ref{prop-Delta-at-most-5} we need to show that the exceptional graphs of Lemma \ref{lem-Delta-5-search} (the fourteen graphs shown in Figure \ref{fig:exception}) all contain good vertices. To accomplish this, we take $x'$ to be any neighbour of $x$, and examine \eqref{to_prove} for every instance in which the level $0$, $1$ and $2$ vertices (measured relative to $x'$) induce a subgraph consistent with one of the exceptional graphs. There are only finitely many such cases; an exhaustive search verifies that in each case $x'$ is good for $x$ (with no case leading to equality in \eqref{to_prove}).

\medskip

The program used to conduct the searches required for Lemmas \ref{lem-Delta-4-search'} and \ref{lem-Delta-5-search} is available, with documentation, at \url{http://www.nd.edu/~dgalvin1/research.html#graphscombinatorics}.

\bigskip

\noindent
{\bf Acknowledgements.} The first author is supported by National Security Agency grant H98230-10-1-0364. The second author performed this research at the University of Minnesota Duluth under the supervision of Joseph Gallian with the support of National Science Foundation and Department of Defense grant DMS 0754106, National Security Agency grant H98230-06-1-0013, and the MIT Department of Mathematics.


\begin{thebibliography}{99}

\bibitem{Alon}
N. Alon, Independent sets in regular graphs and sum-free subsets
of finite groups, {\em Israel J. Math.} {\bf 73} (1991), 247--256.

\bibitem{Kahn}
J. Kahn, An entropy approach to the hard-core model on bipartite
graphs, {\em Combin. Probab. Comput.} {\bf 10} (2001), 219--237.

\bibitem{Zhao}
Y. Zhao, The Number of Independent Sets in a Regular Graph,
{\em Combin. Probab. Comput.} {\bf 19} (2010), 315--320.

\end{thebibliography}
\end{document}